\newtheorem{definition}{Definition}[section]
\newtheorem{theorem}{Theorem}[section]
\newtheorem{remark}{Remark}[section]
\theoremstyle{definition}
\newtheorem{proposition}{Proposition}[section]
\newcommand{\argmin}{\arg\!\min}
\newcommand{\eqnum}{\refstepcounter{equation}\textup{\tagform@{\theequation}}}
\newcommand{\U}{{\mathscr U}}
\newcommand{\dO}{{d_\Omega}}
\newcommand{\bp}{{\bm p}}
\newcommand{\bu}{{\bm u}}
\newcommand{\bv}{{\bm v}}
\newcommand{\bx}{{\bm x}}
\newcommand{\by}{{\bm y}}
\newcommand{\bz}{{\bm z}}
\newcommand{\yrh}{{y_\mathsf{rh}}}
\newcommand{\yrhl}{{\bm y_\mathsf{rh}^\ell}}
\newcommand{\byrh}{{\bm y_\mathsf{rh}}}
\newcommand{\urh}{{\bm u_\mathsf{rh}}}
\newcommand{\urhl}{{\bm u_\mathsf{rh}^\ell}}
\newcommand{\yun}{{y_\mathsf{un}}}
\newcommandx{\info}[2][1=]{\todo[linecolor=OliveGreen,backgroundcolor=OliveGreen!25,bordercolor=OliveGreen,#1]{#2}}
\providecommand{\keywords}[1]{\textbf{\textit{Key words---}} #1}
\def\namedlabel#1#2{\begingroup
    #2%
    \def\@currentlabel{#2}%
    \phantomsection\label{#1}\endgroup
}
\DeclareMathOperator{\spn}{span}
\DeclareMathOperator{\dom}{dom}
\DeclareMathOperator{\prox}{Prox}
\definecolor{DarkRed}{rgb}{0.5 0 0}
\definecolor{DarkBlue}{rgb}{0 0 0.5}
\definecolor{DarkGreen}{rgb}{0 0.5 0}
\definecolor{DarkOrange}{rgb}{0.83 0.33 0}
\definecolor{DarkMagenta}{rgb}{0.83 0 0.67}
\newcommand{\SVnote}{\textcolor{DarkGreen}}
\title{Finite-Dimensional RHC Control of Linear Time-Varying Parabolic PDEs: Stability Analysis and Model-Order Reduction}
\author{Behzad Azmi, Jan Rohleff and Stefan Volkwein}
\date{November 2023}
\begin{document}

\maketitle

\begin{abstract}
This chapter deals with the stabilization of a class of linear time-varying parabolic partial differential equations employing receding horizon control (RHC).  Here, RHC is finite-dimensional, i.e., it enters as a time-depending linear combination of finitely many indicator functions whose total supports cover only a small part of the spatial domain. Further, we consider the squared $\ell_1$-norm as the control cost. This leads to a nonsmooth infinite-horizon problem which allows a stabilizing optimal control with a low number of active actuators over time. First, the stabilizability of RHC is investigated. Then, to speed-up numerical computation, the data-driven model-order reduction (MOR) approaches are adequately incorporated within the RHC framework. Numerical experiments are also reported which illustrate the advantages of our MOR approaches.
\end{abstract}

\keywords{Receding horizon control, model-order reduction, asymptotic stability, optimal control, infinite-dimensional systems, sparse controls.}

\section{Introduction}
\label{Section:1}

Here, we are concerned with the stabilization of the control system of the form
\begin{equation}
    \label{e1}
    \left\{
    \begin{aligned}
        \dot{y}(t)-\nu\Delta y(t) + a(t)y(t)+ \nabla \cdot\big(b(t)y(t)\big)&= \sum\limits^N_{i =1} u_i(t)\mathbf{1}_{R_i}&&\text{in } (0,\infty)\times\Omega,\\
        y&=0&&\text{on } (0,\infty)\times \partial \Omega,\\
        y(0)&=y_0&&\text{on } \Omega
    \end{aligned}
    \right.
\end{equation}
with a time depending control vector $\bu\colonequals[u_1,\ldots, u_N]^\top \in \U\colonequals L^2(0,\infty;\mathbb{R}^N)$, where $\Omega \subset \mathbb{R}^\dO$ is a bounded domain with the smooth boundary $\partial \Omega$ and $\nu>0$ a diffusion parameter.  The  functions $\mathbf{1}_{R_i}$ represent the actuators. They are modeled as the characteristic functions related to open sets $R_i \subset \Omega$ for  $i=1, \dots, N$, and the support of these actuators are contained in a  \emph{small}  open subset of the domain $\Omega$. Moreover, the reaction term  $a(t)=a(t,x)$ and  convection term  $b(t)=b(t,x)$ are, respectively, real- and $\mathbb{R}^n$-valued functions of real variables $t$ and $x$. 

In this work,  we aim to derive an exponentially stabilizing control through a receding horizon control framework. To be more precise,  here the control objective is to construct an RHC  $\urh(y_0) \in  L^2(0,\infty;\mathbb{R}^N)$ for a given initial function $y_0 \in L^2(\Omega;\mathbb{R})\equalscolon L^2(\Omega)$  such that its corresponding state $\yrh$ satisfies
\begin{equation}
    \label{Eq:StEq}
    {\|\yrh(t) \|}_{L^2(\Omega)} \leq c e^{-\zeta t}\,{\| y_0\|}_{L^2(\Omega)}    \quad\text{for all }t>0   
\end{equation}
with constants $c$ and $\zeta>0$ being independent of $y_0$. 

In the RHC framework,  the current control action is obtained by minimizing a performance index defined on a finite time interval,  ranging from the current time $t_0$  to some future time $t_0+T$, with $T \in (0, \infty]$ and $t_0\in(0,\infty)$. Here, we consider the performance index function of the form
\begin{equation}
    \label{e49}
    J_{T}(\bu;t_0,y_0)\colonequals\frac{1}{2}\int_{t_0}^{t_0+T}{\|\nabla y_\bu(t)\|}^2_{L^2(\Omega;\mathbb{R}^n)}\,\mathrm dt+\frac{\beta}{2}\int^{t_0+T}_{t_0} |\bu(t)|^2_{1}\,\mathrm dt,
\end{equation}
where $y_\bu$ solves \eqref{Eq:StEq} and $\beta>0$ holds. The choice of the $\ell^1$-norm defined by $|\bu|_{1} = \sum^N_{j=1} |u_j|$ leads to a nonsmooth convex performance index function and enhances sparsity in the coefficient of the control at any $t \in (t_0,t_0+T)$. For every $t>0$ the term $|\mathbf{u}(t)|_{1}$ can be also interpreted as a convex relaxation of $|\bu(t)|_{0}$; cf., e.g., \cite{MR2243152,MR1963681,MR2045813}. That is, by incorporating the term $\nicefrac{\beta}{2}\int_{t_0}^{t_0+T}|\bu(t)|^2_{1}\,\mathrm dt$ in the performance index function,  we try to minimize the term  $\nicefrac{\beta}{2}\int_{t_0}^{t_0+T} |\bu(t)|^2_{0}\,\mathrm dt$ and, as a consequence, the number of active actuators as the time is running. Moreover, for $\bu\in\U$, we can write 
\begin{equation}
    \label{e4}
    \frac{\beta}{2}\int_{t_0}^{t_0+T} |\bu(t)|^2_{1}\,\mathrm dt=\frac{\beta}{2}\int_{t_0}^{t_0+T} |\bu(t)|^2_{2}\,\mathrm dt+\beta\int^{t_0+T}_{t_0} \sum^N_{\substack{i=1\\i<j}}|u_i(t)u_j(t)|\,\mathrm dt.
\end{equation}   
The second term on the right-hand side of \eqref{e4} is the $L^1$-penalization of the switching constraint $u_i(t)u_j(t)=0$ for $i\neq j$ and $t>0$; cf. \cite{MR3681006,MR3459600}, for instance. Then, the stabilization of the control system \eqref{e1} can also be formulated as the following infinite horizon (i.e., $T=\infty$) optimal control problem
\begin{equation}
    \label{opinf}
    \tag{$\mathbf{OP}_{\infty}(y_0)$}
    \min\big\{ J_{\infty}(\bu;0,y_0)\,\big|\,\bu\in\U\big\}\quad\text{for fixed }y_0\in L^2(\Omega).
\end{equation}
Clearly, \eqref{opinf} is a nonsmooth infinite horizon problem. For dealing with this problem, the RHC framework, known also as Model Predictive Control (MPC), offers a natural approach in which, the solution of  \ref{opinf} is approximated by solving a sequence of nonsmooth finite horizon problems which are well-studied from the theoretical and numerical aspects. These finite horizon problems have the following form. For a given initial time $\bar{t}_0$, initial functions $ \bar{y}_0 \in L^2(\Omega)$,  and prediction horizon $T$ consider the (open-loop) problem
\begin{align}
    \label{optT}
    \tag{$\mathbf{OP}_{T}(\bar{t}_0,\bar{y}_0)$}
    \min J_T(\bu;\bar{t}_0, \bar{y}_0)\quad\text{subject to (s.t.)}\quad\bu\in   \U_T(\bar t_0)\colonequals L^2(\bar t_0,\bar t_0+T;\mathbb R^N),
\end{align}
where $y=y_\bu$ solves the parabolic partial differential equation (PDE)
\begin{align}
    \label{e41}
    \left\{
    \begin{aligned}
        \dot{y}(t)-\nu\Delta y(t) + a(t)y(t)+ \nabla \cdot \big(b(t)y(t)\big)&= \sum\limits^N_{i =1} u_i(t)\mathbf{1}_{R_i} &&\text{in } (\bar{t}_0,\bar{t}_0+T)\times\Omega,\\
        y&=0&&\text{on } (\bar{t}_0,\bar{t}_0+T)\times \partial \Omega,\\
        y(\bar{t}_0)&=\bar{y}_0&&\text{on } \Omega
    \end{aligned}
    \right.
\end{align}
and $\U_T(\bar t_0)$ is supplied by the usual topology in $L^2(\bar t_0,\bar t_0+T;\mathbb R^N)$. In the receding horizon framework,  we define sampling instances $t_k\colonequals k\delta$, for $k=0,1,2,\dots$  and for a chosen sampling time $\delta >0$.  Then,  at every current sampling instance $t_k$ with state $ \yrh(t_k) \in  L^2(\Omega)$,  an  open-loop optimal control problem $(\mathbf{OP}_{T}(t_k, \yrh(t_k)))$ is solved over a finite prediction horizon $[t_k,t_k+T]$ for an appropriate prediction horizon $T>\delta$. Then, the associated optimal control is applied to steer the system from time $t_k$ with the initial state $ \yrh(t_k) \in  L^2(\Omega)$ until time $t_{k+1}\colonequals t_k+\delta$ at which point, a new measurement of the state $\yrh(t_{k+1}) \in  L^2(\Omega)$, is assumed to be available. The process is repeated starting from this new measured state:  we obtain a new optimal control and a new predicted state trajectory by shifting the prediction horizon forward in time. The sampling time $\delta$ is the period between two sample instances. Throughout, we denote the receding horizon state- and control variables by $\yrh(\cdot)$ and $\urh(\cdot)$, respectively. Also,  $(y_T^*(\cdot\,;\bar{t}_0, \bar y_0), \bu^*_T(\cdot\,; \bar t_0, \bar{y}_0))$ stands for the optimal state and control of the optimal control problem with finite time horizon $T$,  and initial function  $\bar y_0$ at initial time $\bar t_0$. This is summarized in Algorithm~\ref{RHA}.

This manuscript reviews the receding horizon framework proposed for linear time-varying parabolic equations in \cite{AK19} with finitely many controllers and the corresponding stability and suboptimality results. To guarantee the stability of RHC, neither terminal costs nor terminal constraints are needed instead, by generating an appropriate sequence of overlapping temporal intervals and applying a suitable concatenation scheme, the stability and suboptimality of RHC are obtained. Previously, this framework was studied for time-invariant continuous-time finite-dimensional controlled systems in, e.g., \cite{MR1833035,MR2950434}, and for time-invariant discrete-time controlled systems in, for instance, \cite{MR2141559,MR2491596,MR2459581}. Then, by incorporating a model-order reduction (MOR) technique, we show that the computation of RHC can be significantly speed-up with impressive accuracy.

\begin{algorithm}[htbp]
\caption{RHC($\delta,T$)}\label{RHA}
\begin{algorithmic}[1]
\REQUIRE{The sampling time $\delta$, the prediction horizon $T\geq \delta$, and the initial state $y_0$;}
\ENSURE{The stability of RHC~$\mathbf{u}_{rh}$.}
\STATE Set~$(\bar{t}_0,\bar{y}_0)\colonequals(0,y_0)$ and $\yrh(0)=y_0 $;
\STATE Find the the optimal solution $(\bar y_T(\cdot\,;\bar{t}_0,\bar{y}_0),\bar{\bu}_T(\cdot\,;\bar{t}_0,\bar{y}_0))$
over the time horizon $(\bar{t}_0,\bar{t}_0+T)$ by solving the open-loop problem~\eqref{optT};
\STATE For all $\tau\in[\bar{t}_0,\bar{t}_0+\delta)$ set $\yrh(\tau)\colonequals\bar y_T(\tau;\bar{t}_0,\bar{y}_0)$ and $\urh(\tau)\colonequals\bar\bu_T(\tau;\bar{t}_0,\bar{y}_0)$;
\STATE Update: $(\bar{t}_0,\bar{y}_0)\leftarrow(\bar{t}_0+\delta,\yrh(\bar{t}_0+\delta;\bar{t}_0,\bar{y}_0))$;
\STATE Go to Step  2;
\end{algorithmic}
\end{algorithm}

In a classical approach, \eqref{e1} (state equation) is approximated by a high-dimensional full-order model (FOM) resulting from discretization. For the spatial discretization, a finite element (FE) method is often used, leading to high-dimensional dynamical systems. Hence, the complexity of the optimization problem directly depends on the number of degrees of freedom (DOF) of the FOM. Mesh adaptivity has been advised to minimize the number of DOFs; see, e.g., \cite{BKR00,LY01}. A more recent approach is the usage of model-order reduction (MOR) methods to replace the FOM with a surrogate reduced-order model (ROM) of possibly very low dimension. MOR is a highly active research field that has seen tremendous development in recent years, both from a theoretical and application point of view. For an introduction and overview, we refer to the monographs and collections \cite{BOCW17,HRS16,HLBR13,QMN16}, for instance. In the context of optimal control, ROM is utilized, e.g., in \cite{Ant09,BBH18,BMS06,GV17,HJ18,HV08,NMT11,SBMR18,TV09}. In MPC, ROM is applied in, e.g., \cite{AV15,GU14,HWG06,LMFP22,LRHYA14,Mec19,MV19,Roh23}.

Here, our MOR approach is based on proper orthogonal decomposition (POD), and it proceeds through the following steps. First, we compute the RHC on the first temporal interval $(0, T)$ for the FOM obtained by FE discretization for both state and adjoint equations, and store them all as the FE snapshots. Next, we generate a POD basis for both the state and adjoint equations from these snapshots. Finally, we compute the RHC for the subsequent intervals, using the associated surrogate ROM for both state and adjoint equations.

 We should also note that while MOR techniques have been explored in the context of MPC, to the best of our knowledge, there are very few studies addressing MPC incorporated with MOR for time-varying control systems and sparsity-promoting control costs.

The remaining sections of this paper are structured as follows: Section~\ref{Section:2} commences by introducing the notation used throughout the paper and revisiting the results concerning the well-posedness of the state and equation and open loop problems within Algorithm~\ref{RHA}. Section~\ref{Section:3} reviews the assumptions and results related to the stabilizability of the control system employing RHC. In Section~\ref{Section:4}, we present the details of the POD-based RHC. Finally, in Section~\ref{Sec:NE}, we report numerical experiments demonstrating the efficiency of the proposed POD-based RHC.

\section{Well-posedness}
\label{Section:2}

In this section, we review some preliminaries about the well-posedness of the state equation \eqref{e1} and the finite horizon optimal control problems \eqref{optT}  within Algorithm \ref{RHA}. Beforehand, we introduce the following 
function spaces. We set $H\colonequals L^2(\Omega ; \mathbb{R})$, $V\colonequals H^1_0(\Omega; \mathbb{R})$, and $V':=H^{-1}(\Omega;\mathbb{R})$, and endow $V$ by the following inner product and corresponding norm
\begin{equation*}
(\phi,\psi)_V\colonequals(\nabla \phi ,\nabla \psi)_{H^\dO},\quad{\|\phi\|}_V\colonequals(\phi,\phi)^{1/2}_V={\|\nabla \phi \|}_{H^\dO} \quad\text{for every  }  \phi, \psi \in V
\end{equation*}
with
\begin{align*}
    H^\dO=\underbrace{H\times\ldots\times H}_{\dO\text{-times}}.
\end{align*}
By identifying  $H$ with its dual, we obtain a Gelfand triple $V\hookrightarrow H \hookrightarrow V'$ of separable Hilbert spaces with dense injections. Finally, for every open interval $(s_1, s_2)\subset[0,\infty)$, we can define the space $W(s_1,s_2)$ by
\begin{align*}
    W(s_1,s_2)\colonequals\big\{v\in L^2(s_1,s_2;V):\dot v \in L^2(s_1,s_2; V')\big\}
\end{align*}
endowed with the norm%
\begin{align*}
    {\|v\|}_{W(s_1,s_2)}\colonequals\left({\|v\|}^2_{L^2(s_1,s_2;V)}+{\|\dot v\|}^2_{L^2(s_1,s_2; V')}\right)^{1/2}.
\end{align*}
Here and throughout, $\dot v$ denotes the distributional derivative for any $v\in W(s_1,s_2)$ with respect to $t$. It is well-known that $W(s_1,s_2) \hookrightarrow C([s_1,s_2];H)$; cf., e.g., \cite[Theorem 3.1]{MR0350177}. 

We define
\begin{equation}
    \label{e93a}
    U_\omega\colonequals\big\{\mathbf{1}_{R_i}\,\big|\,i=1,\dots,N\big\}\quad\text{with }\bigcup^N_{i=1} R_i \subset \omega \subseteq \Omega.
\end{equation}
Now we introduce the linear and bounded operator $\mathcal B_{U_\omega}:\mathbb R^N\to H$ by
\begin{align*}
    \mathcal \mathbb R^N\ni\bv=[v_1,\ldots,v_N]^\top\mapsto \mathcal B\bv=\sum_{i=1}^Nv_i\mathbf 1_{R_i}\in H.
\end{align*}
To study the well-posedness of the state equation on the finite horizon $[\bar t_0,\bar t_0+T]\subsetneq[0,\infty)$ we consider
\begin{equation}
    \label{e17}
    \begin{aligned}
        \dot{y}(t)-\nu\Delta y(t) + a(t)y(t)+ \nabla \cdot \big(b(t)y(t)\big)&=\mathcal B\big(\bu(t)\big)&&\text{in } (\bar{t}_0,\bar{t}_0+T)\times\Omega
,\\
        y&=0&& \text{on }(\bar{t}_0,\bar{t}_0+T)\times \partial \Omega,\\
        y(\bar{t}_0)&=\bar{y}_0&&\text{on } \Omega.
    \end{aligned}
\end{equation}

Throughout the manuscript, we assume that
\begin{equation}
    \label{e56}
    \tag{RA}
    a \in L^{\infty}(0, \infty;  L^r(\Omega)) \text{ with } r\geq\dO \colonequals\dim(\Omega)\text{ and }b \in L^{\infty}((0,\infty)\times \Omega ; \mathbb{R}^\dO).
\end{equation}

\begin{remark}
    It follows from \eqref{e56} that $(a(\cdot)y(\cdot),\varphi)_H$ is essentially bounded on any finite interval $[s_1,s_2]\subset[0,\infty)$ for every $y\in W(s_1,s_2)$ and $\varphi\in V$. 
\end{remark}
 
We also recall the following notion of weak variational solution for \eqref{e17}.

\begin{definition}
    Let  $(\bar{t}_0,T)\in\mathbb R^2_+$ and $(\bar{y}_0,\bu)\in H \times \U_T(\bar t_0)$ be given.  Then,  a function $y \in W(\bar{t}_0,\bar{t}_0+T)$ is referred to as a weak solution of \eqref{e17} if for almost every $t \in (\bar{t}_0, \bar{t}_0+T)$ we have
    \begin{equation}
        \label{e19}
        {\langle\dot{y}(t),\varphi\rangle}_{V',V}+\nu\,{(y(t),\varphi)}_V+{(a(t)y(t),\varphi)}_H-{(b(t)y(t),\nabla\varphi)}_{H^\dO}={(\mathcal B(\bu(t)),\varphi)}_H
    \end{equation}
    for all $\varphi \in V$,  and $y(\bar{t}_0)= \bar{y}_0$ is satisfied in $H$.
\end{definition}

For this weak solution, we have the following existence results and energy estimates.

\begin{proposition}
    \label{Theo2}
    For every $(\bar{t}_0,T)\in\mathbb R^2_+$ and $(\bar{y}_0,
    \bu)\in H \times \U_T(\bar t_0)$, equation \eqref{e17} admits a unique weak solution $y \in W(\bar{t}_0,\bar{t}_0+T)$ satisfying
    \begin{equation}
         {\|y\|}^2_{C([\bar{t}_0,\bar{t}_0+T];H)}+{\| y \|}^2_{W(\bar{t}_0,\bar{t}_0+T)}\leq c_1\left({\|\bar{y}_0\|}^2_H+{\|\bu\|}^2_{\U_T(\bar t_0)}\right),\\ \label{e13}
    \end{equation}
    with  $c_1$ depending on  $(T,\nu,a,b,\Omega)$.
\end{proposition}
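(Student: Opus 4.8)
The plan is to recast the variational identity \eqref{e19} as an abstract linear parabolic problem governed by a time-dependent bilinear form on $V$, to verify that this form is bounded and satisfies a G\r{a}rding inequality, to invoke the classical existence-and-uniqueness theory for such problems, and finally to extract the estimate \eqref{e13} by testing with the solution itself and applying Gronwall's lemma.

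First I would introduce, for almost every $t\in(\bar t_0,\bar t_0+T)$, the bilinear form
\[
\mathcal A(t;\phi,\psi)\colonequals \nu\,(\phi,\psi)_V+(a(t)\phi,\psi)_H-(b(t)\phi,\nabla\psi)_{H^\dO},\qquad \phi,\psi\in V,
\]
so that \eqref{e19} reads $\langle\dot y(t),\varphi\rangle_{V',V}+\mathcal A(t;y(t),\varphi)=(\mathcal B(\bu(t)),\varphi)_H$. The first task is boundedness: the diffusion term is controlled by $\nu\|\phi\|_V\|\psi\|_V$, the convection term by $\|b\|_{L^\infty}\|\phi\|_H\|\psi\|_V$ using $b\in L^\infty$ from \eqref{e56}, and the reaction term by Hölder's inequality combined with the Sobolev embedding $V=H^1_0(\Omega)\hookrightarrow L^p(\Omega)$ together with $a\in L^\infty(0,\infty;L^r(\Omega))$, $r\ge\dO$ --- this is precisely the estimate underlying the Remark following \eqref{e56}. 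This yields $|\mathcal A(t;\phi,\psi)|\le M\|\phi\|_V\|\psi\|_V$ with $M$ uniform in $t$.

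Next I would establish the G\r{a}rding inequality $\mathcal A(t;\phi,\phi)\ge \tfrac{\nu}{2}\|\phi\|_V^2-\lambda\|\phi\|_H^2$. Here $\nu(\phi,\phi)_V=\nu\|\phi\|_V^2$ is the good term, while the convection contribution $-(b(t)\phi,\nabla\phi)$ and the reaction contribution $(a(t)\phi,\phi)_H$ are absorbed via the same Sobolev/Hölder bounds followed by Young's inequality, spending a fraction of $\nu\|\phi\|_V^2$ and leaving a negative multiple of $\|\phi\|_H^2$. With boundedness and the G\r{a}rding inequality in hand, the existence of a unique weak solution $y\in W(\bar t_0,\bar t_0+T)$ follows from the classical theory of linear parabolic equations with time-dependent coefficients via a Galerkin approximation (cf.\ \cite{MR0350177}); uniqueness is then immediate from linearity, since the difference of two solutions with identical data solves the homogeneous equation with zero initial value and must vanish by the energy estimate below.

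For \eqref{e13} I would test \eqref{e19} with $\varphi=y(t)$, use $\langle\dot y(t),y(t)\rangle_{V',V}=\tfrac12\tfrac{\mathrm d}{\mathrm dt}\|y(t)\|_H^2$ and the G\r{a}rding inequality to obtain $\tfrac12\tfrac{\mathrm d}{\mathrm dt}\|y(t)\|_H^2+\tfrac{\nu}{2}\|y(t)\|_V^2\le \lambda\|y(t)\|_H^2+|(\mathcal B(\bu(t)),y(t))_H|$, and then bound the last term using boundedness of $\mathcal B$ and Young's inequality. Integrating and applying Gronwall's lemma controls $\|y(t)\|_H^2$ uniformly on $[\bar t_0,\bar t_0+T]$, giving the $C([\bar t_0,\bar t_0+T];H)$ part, while a further integration in $t$ yields the $L^2(\bar t_0,\bar t_0+T;V)$ bound; the Gronwall factor is the source of the $T$-dependence in $c_1$. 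Finally, the $L^2(\bar t_0,\bar t_0+T;V')$ bound on $\dot y$ is read off the equation itself, estimating $\|\dot y(t)\|_{V'}\le M\|y(t)\|_V+C|\bu(t)|$ through $\|\dot y(t)\|_{V'}=\sup_{\|\varphi\|_V\le1}(|\mathcal A(t;y(t),\varphi)|+|(\mathcal B(\bu(t)),\varphi)_H|)$ and reusing the $L^2(\cdot;V)$ bound. The main obstacle is the borderline treatment of the reaction term $(a(t)\phi,\psi)_H$ under the sharp assumption $r\ge\dO$, where the correct Sobolev exponents must be chosen and, in the critical case, a small diffusion fraction traded against $\|\phi\|_H$; everything else is standard energy-method bookkeeping.
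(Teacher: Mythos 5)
Your proposal is correct and follows the standard route: the paper itself gives no inline proof (Proposition~\ref{Theo2} is deferred to \cite{AK19}), and the argument there is precisely this classical variational one --- a bounded, G\r{a}rding-type bilinear form built from \eqref{e56} via H\"older/Sobolev estimates, Lions--Galerkin existence theory, testing with $y$ plus Gronwall for the $C([\bar t_0,\bar t_0+T];H)$ and $L^2(\bar t_0,\bar t_0+T;V)$ bounds, and reading the $\dot y$ estimate off the equation. Only a cosmetic remark: the identity $\|\dot y(t)\|_{V'}=\sup_{\|\varphi\|_V\le 1}\big(|\mathcal A(t;y(t),\varphi)|+|(\mathcal B(\bu(t)),\varphi)_H|\big)$ should be an inequality ``$\le$'', which is all that is needed.
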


\begin{proof}
    The proof is given in \cite{AK19}. 
\end{proof}

In Algorithm \ref{RHA} the finite horizon optimal control problems of the form \eqref{optT} have to be solved repeatedly in step~2. We next investigate these optimal control problems. In this matter, we express the cost functional in \eqref{optT} as
\begin{equation}
    \label{e94}
    J_{T}(\bu;\bar{t}_0,\bar{y}_0)=\mathcal{F}^{\bar{t}_0,\bar{y}_0}_T(\bu)+\mathcal{G}_T^{\bar{t}_0}(\bu)\quad\text{for }\bu\in\U_T(\bar t_0),
\end{equation}
where
\begin{align*}
    \mathcal{F}_T^{\bar{t}_0,\bar{y}_0}(\bu)\colonequals\frac{1}{2}\,\big\|\mathcal L_T^{\bar{t}_0,\bar{y}_0} \bu\big\|^2_{L^2(\bar{t}_0,\bar{t}_0+T;V)},\quad\mathcal G_T^{\bar{t}_0}(\bu)\colonequals\frac{\beta}{2}\int^{\bar{t}_0+T}_{\bar{t}_0}| \bu(t)|^2_1 \,\mathrm dt
\end{align*}
and $\mathcal L_T^{\bar{t}_0,\bar{y}_0}:\U_T(\bar t_0)\to W(\bar t_0,\bar t_0+T)$ stands for the control-to-state operator for \eqref{e41}. From Proposition \ref{Theo2}, it follows that $\mathcal{F}^{\bar{t}_0,\bar{y}_0}_T:\U_T(\bar t_0)\to\mathbb R$ is well-defined, convex, and continuously Fr\'echet differentiable. Further, $\mathcal{G}_T^{\bar{t}_0}:\U_T(\bar t_0)\to\mathbb R$ is a proper nonsmooth convex function. Hence,  the nonnegative objective function $J_{T}(\cdot\,;\bar{t}_0,\bar{y}_0):\U_T(\bar t_0)\to\mathbb R$ is weakly lower semi-continuous and radially unbounded, and thus, the existence of a unique minimizer for \eqref{optT} is established through the direct method in the calculus of variations; cf., e.g., in \cite{MR2361288}. Uniqueness is ensured by the strict convexity of $\mathcal{F}^{\bar{t}_0,\bar{y}_0}_T$ which is justified by the injectivity of $\mathcal L_T^{\bar{t}_0,\bar{y}_0}$.

\begin{proposition}
    \label{prop1}
    For every $(\bar{t}_0,T)\in \mathbb{R}_+^2$ and $\bar{y}_0\in H$ the finite horizon problem \eqref{optT} admits a unique minimizer $\bu^*$.
\end{proposition}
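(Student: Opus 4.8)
The plan is to establish existence via the direct method in the calculus of variations and then to derive uniqueness from strict convexity, essentially making precise the sketch already indicated above. First I would observe that, being a sum of two nonnegative terms, $J_T(\cdot\,;\bar t_0,\bar y_0)$ is bounded below, so that $m\colonequals\inf_{\bu\in\U_T(\bar t_0)}J_T(\bu;\bar t_0,\bar y_0)\in[0,\infty)$ is finite. I then fix a minimizing sequence $\{\bu_n\}\subset\U_T(\bar t_0)$ with $J_T(\bu_n;\bar t_0,\bar y_0)\to m$ and aim to show that a weak limit point of this sequence is a minimizer.

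The next step is coercivity. The crucial elementary observation is the pointwise bound $|\bu(t)|_1\geq|\bu(t)|_2$, which yields
\begin{equation*}
    \mathcal G_T^{\bar t_0}(\bu)=\frac{\beta}{2}\int_{\bar t_0}^{\bar t_0+T}|\bu(t)|_1^2\,\mathrm dt\geq\frac{\beta}{2}\,{\|\bu\|}^2_{\U_T(\bar t_0)},
\end{equation*}
and hence, since $\mathcal F_T^{\bar t_0,\bar y_0}\geq0$, one gets $J_T(\bu;\bar t_0,\bar y_0)\geq\tfrac{\beta}{2}{\|\bu\|}^2_{\U_T(\bar t_0)}$. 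This makes $J_T$ radially unbounded, so the minimizing sequence $\{\bu_n\}$ is bounded in the Hilbert space $\U_T(\bar t_0)=L^2(\bar t_0,\bar t_0+T;\mathbb R^N)$, and I may extract a (not relabeled) subsequence converging weakly to some $\bu^*\in\U_T(\bar t_0)$.

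I would then pass to the limit using weak lower semicontinuity. The functional $\mathcal F_T^{\bar t_0,\bar y_0}$ is convex and continuous (indeed continuously Fréchet differentiable by Proposition~\ref{Theo2}), hence weakly sequentially lower semicontinuous; the functional $\mathcal G_T^{\bar t_0}$ is proper, convex and strongly lower semicontinuous, hence weakly sequentially lower semicontinuous as well. Consequently $J_T(\cdot\,;\bar t_0,\bar y_0)$ is weakly sequentially lower semicontinuous, and
\begin{equation*}
    J_T(\bu^*;\bar t_0,\bar y_0)\leq\liminf_{n\to\infty}J_T(\bu_n;\bar t_0,\bar y_0)=m,
\end{equation*}
which forces equality and shows that $\bu^*$ is a minimizer, completing the existence part.

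For uniqueness I would argue that $J_T$ is strictly convex, for which it suffices that $\mathcal F_T^{\bar t_0,\bar y_0}$ is strictly convex. Since $\mathcal L_T^{\bar t_0,\bar y_0}$ is affine, I write $\mathcal L_T^{\bar t_0,\bar y_0}\bu=S\bu+z_0$, where $S$ is the linear control-to-state map with zero initial datum and $z_0$ is the free response driven by $\bar y_0$ with zero control, so that $\mathcal F_T^{\bar t_0,\bar y_0}(\bu)=\tfrac12{\|S\bu+z_0\|}^2_{L^2(\bar t_0,\bar t_0+T;V)}$. This is strictly convex precisely when $S$ is injective. To see injectivity, note that because ${\|\cdot\|}_V={\|\nabla\cdot\|}_{H^\dO}$ and $S\bu$ has homogeneous Dirichlet data, ${\|S\bu\|}_{L^2(\bar t_0,\bar t_0+T;V)}=0$ forces $S\bu\equiv0$; substituting $y=S\bu=0$ into the weak form of \eqref{e17} with zero initial datum gives $\mathcal B(\bu(t))=0$ for a.e.\ $t$, and the linear independence of the actuator indicators $\{\mathbf 1_{R_i}\}$ then yields $\bu=0$. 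I expect this injectivity/strict-convexity step to be the main point requiring care, as it is where the specific structure of the actuator operator $\mathcal B$ enters (and where an implicit nondegeneracy assumption on the $R_i$ is used); the remainder is the routine machinery of the direct method.
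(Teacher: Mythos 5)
Your proposal follows essentially the same route as the paper: existence by the direct method (coercivity and weak lower semicontinuity of the convex functionals $\mathcal F_T^{\bar t_0,\bar y_0}$ and $\mathcal G_T^{\bar t_0}$), and uniqueness from strict convexity of $\mathcal F_T^{\bar t_0,\bar y_0}$, which the paper likewise reduces to the injectivity of the control-to-state operator $\mathcal L_T^{\bar t_0,\bar y_0}$. You simply fill in details the paper leaves implicit --- the coercivity bound via $|\cdot|_1\geq|\cdot|_2$, the splitting into linear part plus free response, and the reduction of injectivity to linear independence of the indicators $\mathbf 1_{R_i}$ --- so the two arguments coincide in substance.
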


We now proceed to derive the first-order optimality condition for \eqref{optT}. Given that $\mathcal F^{\bar t_0,\bar y_0}_T$ is differentiable and $\dom(\mathcal G^{\bar t_0}) =\U_T(\bar t_0)$, the first-order optimality condition for the minimizer $\bu^*$ can be formulated as follows (cf., e.g., \cite{Cla90})
\begin{equation}
    \label{e51}
    0\in\partial\big(\mathcal F^{\bar t_0,\bar y_0}_T+\mathcal G^{\bar t_0}_T\big)(\bu^*)=\partial\mathcal F^{\bar t_0,\bar y_0}_T(\bu^*)+\partial\mathcal G^{\bar t_0}_T(\bu^*)=\big\{ D\mathcal F^{\bar t_0,\bar y_0}_T(\bu^*)\big\}+\partial\mathcal G^{\bar t_0}_T(\bu^*),
\end{equation}
where $D\mathcal F^{\bar t_0,\bar y_0}_T$ is the first Fr\'echet derivative of $\mathcal F^{\bar t_0,\bar y_0}_T$ with respect to $\bu$. We introduce the following adjoint equation,
\begin{equation}
    \label{e52}
    \left\{
    \begin{aligned}
        -\dot{p}(t)-\nu \Delta p(t)+ a(t)p(t)-\big(b(t) \cdot \nabla p(t)\big)&= -\Delta y^*(t)    &&\text{in }(\bar{t}_0,\bar{t}_0+T)\times\Omega,\\
        p&=0&&\text{on }(\bar{t}_0,\bar{t}_0+T)\times\partial\Omega,\\
        p(\bar{t}_0+T)&=0&&\text{on }\Omega
    \end{aligned}
    \right.
\end{equation}
with $y^*=y(\bu^*)\in W(\bar{t}_0,\bar{t}_0+T)$ as the solution of \eqref{e41} for $\bu=\bu^* \in\U_T(\bar t_0)$. Then  $D\mathcal F^{\bar t_0,\bar y_0}_T$ can be expressed as $D\mathcal F^{\bar t_0,\bar y_0}_T(\bu^*)=\mathcal B^\star p$ in  $\U_T(\bar{t}_0)$, where $\mathcal B^\star:H\to\mathbb R^N$ stands for the adjoint operator of $\mathcal B$ given as

\begin{align*}
    \mathcal B^\star\varphi=\big[{(\mathbf 1_{R_1},\varphi)}_H\,|\ldots|\,{(\mathbf 1_{R_N},\varphi)}_H\big]^\top\in\mathbb R^N\quad\text{for }\varphi\in H.
\end{align*}
Now, the optimality condition \eqref{e51} can be stated as
\begin{equation}
    \label{e55}
    -\mathcal B^\star p \in \partial \mathcal G_T^{\bar t_0}(\bu^*),
\end{equation}
where $p=p(y^*)$ is the weak solution to \eqref{e52}. Well-posedness of the adjoint equation follows with similar arguments given in  \cite{AK19} and the fact that  $\Delta y^* \in L^2(\bar{t}_0, \bar{t}_0+T;V')$ holds true
.

To numerically address sub-problems of the form \eqref{e94}, commonly employed approaches are the forward-backward splitting (FBS) algorithms. These methods rely on iteratively evaluating the proximal operator $\prox_{\mathcal{G}}(\hat{\bu}): L^2(\bar{t}_0, \bar{t}_0+T; \mathbb{R}^N) \to L^2(\bar{t}_0, \bar{t}_0+T; \mathbb{R}^N)$ defined as:
\begin{equation*}
    \prox_{\mathcal{G}}(\hat{\bu})\colonequals\argmin\bigg\{\frac{1}{2}\,{\|\bu-\hat{\bu}\|}^2_{L^2(\bar t_0,\bar t_0+T;\mathbb{R}^N)} +\mathcal G_T^{\bar t_0}(\bu):\bu\in\U_T(\bar t_0)\bigg\}.
\end{equation*}
The well-posedness of $\prox_{\mathcal{G}}$ is justified by the properties that $\mathcal G_T^{\bar t_0}$ is proper, convex, and weakly lower semi-continuous. Subsequently, the following proposition articulates the first-order optimality conditions in terms of the proximal operator. This optimality condition serves as the termination criterion for the FBS algorithm that will be utilized later.

\begin{proposition}
    Let $(T,\bar{t}_0)\in\mathbb{R}_+^2$ and $\bar{y}_0\in H$ be given. Then $\bu^*\in\U_T(\bar t_0)$ is the unique minimizer to \eqref{optT} iff there exists a \SVnote{(dual)} solution $p^*=p(y^*(\bu^*)) \in W(\bar{t}_0,\bar{t}_0+T)$ to \eqref{e41} such that  the following equality holds
    \begin{equation}
        \label{e54}
        \bu^*=\prox_{\bar{\alpha}\mathcal{G}}\big(\bu^* -\bar\alpha\mathcal B^\star p^*\big)\quad\text{for any }\bar\alpha>0. 
    \end{equation}
    Here $y^*(\bu^*)$ is the solution to \eqref{e41} for $\bu=\bu^*$.
\end{proposition}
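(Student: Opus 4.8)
The plan is to reduce the claimed fixed-point characterization to the first-order optimality condition already obtained in \eqref{e51}--\eqref{e55}. Recall that, because $\mathcal F^{\bar t_0,\bar y_0}_T$ is Fr\'echet differentiable with $D\mathcal F^{\bar t_0,\bar y_0}_T(\bu^*)=\mathcal B^\star p^*$ and $\dom(\mathcal G^{\bar t_0}_T)=\U_T(\bar t_0)$, the element $\bu^*$ is the unique minimizer of \eqref{optT} if and only if there is an adjoint state $p^*=p(y^*(\bu^*))$ solving \eqref{e52} with
\[
    -\mathcal B^\star p^*\in\partial\mathcal G_T^{\bar t_0}(\bu^*).
\]
Hence it suffices to show that, for each fixed $\bar\alpha>0$, the fixed-point identity \eqref{e54} is equivalent to this inclusion.

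The single ingredient needed is the standard subdifferential characterization of the proximal map: for a proper, convex, weakly lower semi-continuous functional $g$ on the Hilbert space $\U_T(\bar t_0)$ and any $\bar\alpha>0$,
\[
    \bu=\prox_{\bar\alpha g}(\hat\bu)\quad\Longleftrightarrow\quad \tfrac{1}{\bar\alpha}\big(\hat\bu-\bu\big)\in\partial g(\bu).
\]
I would establish this directly from the definition of $\prox$: since $\prox_{\bar\alpha g}(\hat\bu)$ is the unique minimizer of the strongly convex functional $\bu\mapsto\tfrac12\|\bu-\hat\bu\|^2_{\U_T(\bar t_0)}+\bar\alpha\,g(\bu)$, and since its quadratic part is everywhere Fr\'echet differentiable with derivative $\bu-\hat\bu$, the Moreau--Rockafellar sum rule (applicable without any constraint qualification, as $g$ is finite on the whole space) yields the optimality condition $0\in(\bu-\hat\bu)+\bar\alpha\,\partial g(\bu)$. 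Rearranging gives the stated inclusion, and conversely any $\bu$ satisfying the inclusion is the minimizer by convexity.

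Applying this equivalence with $g=\mathcal G_T^{\bar t_0}$, $\hat\bu=\bu^*-\bar\alpha\mathcal B^\star p^*$, and $\bu=\bu^*$, the right-hand side becomes
\[
    \tfrac{1}{\bar\alpha}\big((\bu^*-\bar\alpha\mathcal B^\star p^*)-\bu^*\big)=-\mathcal B^\star p^*\in\partial\mathcal G_T^{\bar t_0}(\bu^*),
\]
which is precisely the optimality inclusion above. Since the resulting inclusion is independent of $\bar\alpha$, the identity \eqref{e54} holds for one $\bar\alpha>0$ exactly when it holds for every $\bar\alpha>0$, and in either case exactly when $\bu^*$ is the minimizer; this proves both implications together with the ``for any $\bar\alpha>0$'' clause. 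The only point requiring care---and the nearest thing to an obstacle---is the justification of the sum rule in this infinite-dimensional nonsmooth setting; it is unproblematic here because the smooth summand is continuous (indeed $C^\infty$) everywhere, exactly as in the splitting already used to pass from \eqref{e51} to \eqref{e55}, and the well-posedness of $\prox_{\bar\alpha\mathcal G}$ is guaranteed by the strong convexity and weak lower semi-continuity recorded before the statement.
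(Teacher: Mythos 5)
Your proof is correct and takes essentially the same route as the paper: the paper's proof consists of observing that one only needs to verify the equivalence between the subdifferential inclusion \eqref{e55} and the fixed-point identity \eqref{e54}, citing \cite[Corollary~27.3]{MR2798533} for that equivalence. You perform exactly this reduction, simply proving the cited characterization $\bu=\prox_{\bar\alpha g}(\hat\bu)\Leftrightarrow\tfrac{1}{\bar\alpha}\big(\hat\bu-\bu\big)\in\partial g(\bu)$ from the definition of the proximal map and the sum rule instead of invoking the reference, which also cleanly yields the $\bar\alpha$-independence.
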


\begin{proof}
    One needs only to verify the equivalence between the inequalities \eqref{e55} and \eqref{e54} which is done as in  \cite[Corollary 27.3]{MR2798533}.
\end{proof}

Due to \cite[Proposition 16.63]{MR2798533}, the subdifferential  of $\mathcal G_T^{\bar t_0}$ at $\bu$ is characterized by
\begin{equation*}
    \partial \mathcal G_T^{\bar t_0}(\bu)\colonequals\left\lbrace \bv \in\U_T(\bar t_0):\bv(t) \in  \partial\Big( \frac{\beta}{2}{|\bu(t)|}^2_{1}\Big)  \text{ for a.e. } t\in (\bar{t}_0,\bar{t}_0+T)  \right\rbrace. 
\end{equation*}
Here, we provide the pointwise characterization of $\prox_{\bar{\alpha}\mathcal{G}}$, a fundamental aspect for the FBS algorithm to be employed later in Section \ref{Sec:NE}. Using \cite[Proposition 24.13]{MR2798533} and setting $g\colonequals\nicefrac{\beta}{2}\,| \cdot|^2_{1}$, the proximal operator $\prox_{\bar{\alpha}\mathcal{G}}$ for any $\bar{\alpha}>0$ can be expressed pointwise as
\begin{equation}
    \label{e87a}
    \left[\prox_{\bar{\alpha}\mathcal{G}}(\bu)\right] (t) = \prox_{\bar{\alpha} g}(\bu(t))\quad\text{for almost all } t\in (\bar{t}_0 ,\bar{t}_0+T),
\end{equation}
and thus the first-order optimality conditions \eqref{e54} can be stated as
\begin{equation}
    \label{e87}
    \bu^*(t)=\prox_{\bar{\alpha} g}(\bu^*(t)-\bar{\alpha} \mathcal B^\star p(t))\quad\text{for almost all } t\in (\bar{t}_0,\bar{t}_0+T).
\end{equation}

Thus, the remaining task involves computing the proximal operator of $\bar{\alpha} g: \mathbb{R}^N \to \mathbb{R}_+$. By following the same argument as in \cite[Lemma 6.70]{MR3719240} and \cite{MR3380641}, one can establish for any $\bx:=(x_1,\dots,x_N)^\top \in \mathbb R^N$ that
\begin{equation}
    \label{e85}
    \prox_{\bar{\alpha} g }(\bx)\colonequals
    \left\{
    \begin{aligned}
        &\left(\frac{\lambda_ix_i}{\lambda_i+\bar{\alpha} \beta} \right)^N_{i =1}&&\text{if }\bx\neq 0,\\
        &0&&\text{if }\bx= 0,
    \end{aligned}\right\}\quad\text{for any }\bx=[x_1,\dots,x_N]^\top \in \mathbb R^N
\end{equation}
where we set
\begin{align*}
    \lambda_i\colonequals\Bigg[\frac{\sqrt{\frac{\bar{\alpha} \beta}{2}}|x_i| }{\sqrt{\mu^*}}-\bar{\alpha}\beta\Bigg]_+\quad\text{with }[\cdot]_+\colonequals\max(0, \cdot )
\end{align*}
and $\mu^*$ being any positive zero of the following one-dimensional nonincreasing function
\begin{equation}
    \label{e95}
    \psi(\mu)\colonequals\sum^N_{i=1} \left[ \frac{\sqrt{\frac{\bar{\alpha} \beta}{2}}|x_i| }{\sqrt{\mu}}-\bar{\alpha} \beta \right]_+-1.
\end{equation}
In other words, $\mu^*$  is chosen so that  $\sum^N_{i=1} \lambda_i = 1$.

\begin{remark}
    Due to the characterization \eqref{e85} of $\prox_{\bar{\alpha} g}$, the cardinality of the set
    \begin{align*}
        \mathscr D(\bx)\colonequals\Bigg\{i\in \{1,\dots,N \}:\frac{\sqrt{\frac{\bar{\alpha}\beta}{2}}|x_i|}{\sqrt{\mu^*}}-\bar{\alpha}\beta > 0\Bigg\}    
    \end{align*}
    is the number of nonzero components  of $\prox_{\bar{\alpha} g}(\bx)$.  Hence, due to \eqref{e87},   $|\mathscr D^*(t)|\colonequals|\mathscr D(\bu^*(t)-\mathcal B^\star p(t))| $ stands for the number of nonzero components of  $\bu^*$ at time $t$.
\end{remark}

\section{Stability of RHC}
\label{Section:3}

In this section,  we review selected results on the stabilization of \eqref{e1} by finitely many controllers computed by Algorithm \ref{RHA}. Beforehand, we will define the value functions. 

\begin{definition}
    For any $y_0 \in H$  the infinite horizon value function $V_{\infty}: H \to \mathbb{R}_+$ is defined by
    \begin{equation*}
        V_{\infty}(y_0)\colonequals\min\big\{J_{\infty}(\bu;0,y_0): \bu\in\U\text{ satisfies \eqref{e1}}\big\}.
    \end{equation*}
    Similarly, for every $(\bar t_0,T)\in \mathbb{R}^2_+$ and $\bar{y}_0\in H$ the finite horizon value function $V_T: \mathbb{R}_+ \times H \to \mathbb{R}_+$ is defined by
    \begin{equation*}
        V_{T}(\bar{t}_0,\bar{y}_0)\colonequals\min\big\{J_T(\bu;\bar{t}_0,\bar{y}_0):\bu\in\U_T(\bar t_0)\text{ satisfies \eqref{e41}}\big\}.
    \end{equation*}
\end{definition}

The suboptimality of the RHC computed by Algorithm \ref{RHA} is expressed in terms of the infinite horizon value function, while the finite horizon value function serves as the Lyapunov function to establish stabilizability. To establish $V_{T}$ as a Lyapunov function, it is essential to demonstrate its uniform decrement and positivity with respect to the $H$-norm. The former is justified by the stabilizability property of the control system \eqref{e1}. Following arguments similar to those presented in \cite{MR3691212,7330942,MR3337988,Phan2018} for time-varying controlled systems, it can be shown that for $\lambda>0$ and the set of actuators $U_\omega $ defined in \eqref{e93a} and $\Pi_N:H \to\spn{(U_\omega)} \subset H\hookrightarrow V'$ as the orthogonal projection onto $\spn{(U_\omega)}$ in $H$, if the condition
\begin{equation}    
    \label{e21}
    \tag{coac}
    {\|\mathcal E_{H,V'}-\Pi_{N}\|}^2_{\mathcal{L}(H,V')}<\Upsilon^{-1},
\end{equation}
holds for a constant $\Upsilon\colonequals\Upsilon(\lambda,a,b)>0$
and the canonical embedding operator $\mathcal E_{H,V'}:H\to V'$, then there exists a stabilizing feed back control $\hat\bu=\hat\bu(y_0)$ with $\hat\bu(t)=[\hat u_1(t),\ldots,\hat u_N(t)]^\top$ which steers the system \eqref{e1} to zero exponentially with rate $\lambda$; cf., e.g., \cite[Theorem 2.10]{MR3691212}.

As an illustrative example, consider the case where $\omega$ is an open rectangle defined as
\begin{equation}
    \label{e100}
    \omega\colonequals\prod^n_{i=1}(l_i,u_i)\subset \Omega.
\end{equation}
We consider the uniform partitioning of $\omega$ into a family of sub-rectangles. For each $i=1,\dots,n$, the interval $(l_i,u_i)$ is subdivided into $d_i$ intervals denoted by $I_{i,k_i}=(l_i+k_i\delta_i,l_i+(k_i+1)\delta_i)$ with $\delta_i=\nicefrac{(u_i-l_i)}{d_i}$, $k_i\in {0,1,\dots,d_i-1}$, and $i=1,\ldots,n$. Consequently, $\omega$ is partitioned into $N\colonequals\prod^n_{i=1} d_i$ sub-rectangles defined as
\begin{equation}
    \label{e92}
    \big\{R_i : i \in \{ 1,\dots,N\}\big\}\colonequals\bigg\{\prod^n_{i =1}I_{i,k_i} : k_i \in \{ 0,1,\dots, d_i  -1\}\bigg\}.
\end{equation}
For this selection of actuators, it was demonstrated in \cite[Example 2.12]{MR3691212} and \cite[Section IV]{7330942} that \eqref{e21} is satisfied, provided that $N \geq( (\nicefrac{\bar{I}^2}{\pi^2})\Upsilon)^{\nicefrac{n}{2}}$, where $\bar{I}:= \max_{1\leq i \leq N} (u_i-l_i)$. This relationship provides a lower bound on the number of actuators, ensuring exponential stabilizability. It is defined with respect to the chosen parameters $\lambda$, $a$, $b$, $\nu$, and the set of actuators defined by \eqref{e93a} and \eqref{e92}, with this dependence expressed in terms of the values of $\Upsilon(\lambda,a,b,U_\omega)$ and $\bar{I}$.

In the next theorem, we state the exponential stability of RHC obtained by Algorithm \ref{RHA}. 

\begin{theorem}[Suboptimality and exponential decay]
    \label{subopth}
    Assume that for $U_\omega\subset H$ given in \eqref{e93a} and $\lambda>0$, condition \eqref{e21} is satisfied with a real number $\Upsilon>0$.   Then, for any given $\delta$,  there exist numbers $T^*=T^*(\delta,U_\omega)>\delta$ and $ \alpha = \alpha(\delta,\mathcal{U}_{\omega}) \in (0,1)$ such that for every prediction horizon $T\geq T^*$, the RHC $\urh\in L^2(0,\infty;\mathbb{R}^N)$ obtained by Algorithm~{\em\ref{RHA}} is globally \textbf{suboptimal}  and \textbf{exponentially stable}. That is, it satisfies 
    \begin{equation}
        \label{ed27}
        \alpha V_{\infty}(y_0)\leq\alpha J_{\infty}(\urh;0,y_0)\leq V_T(0,y_0) \leq V_{\infty}(y_0)
    \end{equation}
    and 
    \begin{equation}
        \label{ed28}
        {\|\yrh(t)\|}^2_{H} \leq c_He^{-\zeta t}\,{\|y_0\|}^2_H\quad\text{for }t\geq 0
    \end{equation}
    for every $y_0\in H$, where the positive numbers $\zeta$ and $c_H$  depend on $\alpha$, $\delta$ and $T$, but are independent of $y_0$. 
\end{theorem}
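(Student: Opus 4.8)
The plan is to use the finite-horizon value function $V_T$ as a Lyapunov function for the closed loop and to reduce both claims to a single \emph{relaxed dynamic programming} estimate. Writing $\ell(s)\colonequals\tfrac12\norm{\nabla\yrh(s)}^2_{L^2(\Omega;\mathbb R^n)}+\tfrac\beta2\,|\urh(s)|_1^2$ for the running cost along the closed loop, the aim is to produce an $\alpha\in(0,1)$, depending only on $\delta$ and $U_\omega$, such that for every sampling index $k$ and every $T\ge T^*$,
\[
V_T\big(t_{k+1},\yrh(t_{k+1})\big)\le V_T\big(t_k,\yrh(t_k)\big)-\alpha\int_{t_k}^{t_{k+1}}\ell(s)\,\mathrm ds.
\]
Granting this, the suboptimality chain \eqref{ed27} follows by summing over $k=0,\dots,K-1$, discarding the nonnegative term $V_T(t_K,\yrh(t_K))$, and letting $K\to\infty$, which gives $\alpha J_\infty(\urh;0,y_0)\le V_T(0,y_0)$; in particular $J_\infty(\urh;0,y_0)<\infty$, so $\urh$ is admissible in $\U$ and $V_\infty(y_0)\le J_\infty(\urh;0,y_0)$ by optimality, while $V_T(0,y_0)\le V_\infty(y_0)$ by restricting an infinite-horizon control to $(0,T)$.

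To build the decrease inequality I would assemble four ingredients, each uniform in the initial time. First, an \emph{upper bound} $V_T(\bar t_0,\bar y_0)\le V_\infty(\bar t_0,\bar y_0)\le C_V\norm{\bar y_0}^2_H$ (with $V_\infty(\bar t_0,\cdot)$ the infinite-horizon value at initial time $\bar t_0$): inserting the exponentially stabilizing feedback $\hat\bu$ granted by \eqref{e21} produces a control whose state decays at rate $\lambda$ and whose cost is bounded by $C_V\norm{\bar y_0}_H^2$. Second, \emph{coercivity} of the running cost: by the Poincar\'e inequality $\ell(s)\ge\tfrac12\norm{\nabla\yrh(s)}^2_{L^2(\Omega;\mathbb R^n)}\ge\tfrac{c_P}{2}\norm{\yrh(s)}_H^2$. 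Third, the \emph{dynamic programming principle}
\[
V_T(\bar t_0,\bar y_0)=\int_{\bar t_0}^{\bar t_0+\delta}\ell(s)\,\mathrm ds+V_{T-\delta}\big(\bar t_0+\delta,y^*_T(\bar t_0+\delta;\bar t_0,\bar y_0)\big),
\]
proved by splitting and re-concatenating admissible controls, together with the identity $y^*_T(\bar t_0+\delta)=\yrh(\bar t_0+\delta)$ built into Algorithm~\ref{RHA}. Fourth, a \emph{tail estimate} $0\le V_T(t,z)-V_{T-\delta}(t,z)\le V_\infty(t,z)-V_{T-\delta}(t,z)\le\gamma(T)\norm{z}_H^2$ with $\gamma(T)\to0$ exponentially; this follows because along the infinite-horizon optimal state $y^\infty$ one has $\tfrac{\mathrm d}{\mathrm ds}V_\infty(s,y^\infty(s))=-\ell(s)\le-\tfrac{c_P}{2C_V}V_\infty(s,y^\infty(s))$, so the tail cost $V_\infty(t+T-\delta,y^\infty(t+T-\delta))$ decays like $e^{-c(T-\delta)}\norm{z}_H^2$.

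Combining the third and fourth ingredients at $\bar t_0=t_k$ yields
\[
V_T(t_{k+1},\yrh(t_{k+1}))\le V_T(t_k,\yrh(t_k))-\int_{t_k}^{t_{k+1}}\ell(s)\,\mathrm ds+\gamma(T)\norm{\yrh(t_{k+1})}_H^2,
\]
so the decrease inequality with $\alpha=1-C'\gamma(T)$ will follow once the defect is controlled by a fraction of the running cost, i.e. once I prove the \emph{controllability estimate} $\norm{\yrh(t_{k+1})}_H^2\le C'\int_{t_k}^{t_{k+1}}\ell(s)\,\mathrm ds$ with $C'$ independent of $k$ and $T$. I expect this to be the main obstacle. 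It is obtained from the energy balance of \eqref{e41}: testing with $\yrh$ and using \eqref{e56} gives $\tfrac{\mathrm d}{\mathrm dt}\norm{\yrh}_H^2\le C(\norm{\yrh}_H^2+|\urh|^2)$, and integrating the resulting Gronwall bound over $[t_k,t_{k+1}]$ turns the endpoint value into a multiple of $\int_{t_k}^{t_{k+1}}(\norm{\yrh}_H^2+|\urh|^2)\,\mathrm ds$, which the coercivity of $\ell$ together with $|\urh|^2\le|\urh|_1^2$ bounds by $\int_{t_k}^{t_{k+1}}\ell\,\mathrm ds$. Choosing $T^*$ so large that $C'\gamma(T)<1$ for $T\ge T^*$ then fixes $\alpha\in(0,1)$.

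Finally, the exponential decay \eqref{ed28} is the standard Lyapunov consequence. Combining the decrease inequality with the quadratic upper bound $V_T\le C_V\norm{\cdot}_H^2$ and with a matching lower bound $\int_{t_k}^{t_{k+1}}\ell\ge\theta\,V_T(t_k,\yrh(t_k))$ (obtained from the same energy balance, now used to bound $\norm{\yrh(t_k)}_H^2$ from below by the cost integral) gives $V_T(t_{k+1},\yrh(t_{k+1}))\le(1-\alpha\theta)V_T(t_k,\yrh(t_k))$, hence geometric decay of $V_T$ and of $\norm{\yrh(t_k)}_H^2$ at the sampling instants; Proposition~\ref{Theo2} controls the growth on each interval $[t_k,t_{k+1}]$ and upgrades this to the continuous-in-time bound \eqref{ed28}, with $\zeta$ and $c_H$ determined by $\alpha$, $\delta$, $T$. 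Throughout, the point specific to the time-varying setting is that every constant ($C_V$, $c_P$, $C'$, $\gamma$, $\theta$) must be uniform with respect to the initial time $\bar t_0=t_k$, and this uniformity is precisely what the global-in-time bounds \eqref{e56} on $a$ and $b$ secure.
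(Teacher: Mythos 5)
Your overall skeleton (relaxed dynamic programming with $V_T$ as a Lyapunov function) is the right family of arguments and matches in spirit the approach of \cite{AK19}, to which the paper defers its proof. Moreover, the two steps you single out as delicate are in fact unproblematic: the controllability estimate $\norm{\yrh(t_{k+1})}_H^2\le C'\int_{t_k}^{t_{k+1}}\ell\,\mathrm ds$ and its companion lower bound both follow from the energy identity for \eqref{e41} together with \eqref{e56} and Poincar\'e's inequality, uniformly in $t_k$, exactly as you sketch. The genuine gap is in your fourth ingredient, the tail estimate, which is where the entire difficulty of the theorem is concentrated. Your argument shows that along the \emph{infinite-horizon} optimal trajectory $y^\infty$ the value decays exponentially, i.e.\ $V_\infty(t+T-\delta,y^\infty(t+T-\delta))\le C_V e^{-c(T-\delta)}\norm{z}_H^2$. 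But this bounds the gap $V_\infty(t,z)-V_{T-\delta}(t,z)$ from \emph{below}, not from above: writing $\ell^\infty$ for the running cost along $(y^\infty,\bu^\infty)$, the restriction of $\bu^\infty$ is admissible for the $(T-\delta)$-horizon problem, so $V_{T-\delta}(t,z)\le\int_t^{t+T-\delta}\ell^\infty\,\mathrm ds$, and hence
\[
V_\infty(t,z)-V_{T-\delta}(t,z)\;\ge\;V_\infty(t,z)-\int_t^{t+T-\delta}\ell^\infty\,\mathrm ds\;=\;V_\infty\big(t+T-\delta,y^\infty(t+T-\delta)\big).
\]
The inequality you need goes the other way. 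To obtain an upper bound you must compare $V_\infty(t,z)$ with the cost of a control built from the \emph{finite-horizon} optimal pair: concatenating $\bu^*_{T-\delta}$ with the stabilizing feedback granted by \eqref{e21} gives $V_\infty(t,z)\le V_{T-\delta}(t,z)+C_V\norm{y^*_{T-\delta}(t+T-\delta)}_H^2$, so everything hinges on showing that the terminal state of the finite-horizon optimal trajectory is small.

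That smallness is not automatic. The finite-horizon problem carries no terminal penalty, so the optimizer has no incentive to keep the state small near the end of the horizon; the bounds available from your first two ingredients (pigeonhole on $\int\norm{\nabla y^*}^2\,\mathrm ds\le 2C_V\norm{z}_H^2$ combined with the energy estimate) yield only $\norm{y^*_{T-\delta}(t+T-\delta)}_H^2\le C\norm{z}_H^2$ with $C$ independent of $T$ --- uniform, but not vanishing as $T\to\infty$. Proving that the finite-horizon optimal trajectory actually decays (a turnpike-type property) is the core of the theorem; it is precisely what \cite{AK19} accomplishes with the sequence of overlapping temporal intervals and the concatenation scheme mentioned in Section~\ref{Section:1}, iterating feedback-plus-pigeonhole constructions to obtain geometric decay from one interval to the next. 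Note finally that your tail estimate is essentially equivalent to the suboptimality inequality \eqref{ed27} being proved ($\alpha V_\infty\le V_T$ with $\alpha(T)\to 1$), so treating it as a routine ingredient without an independent proof makes the argument circular at its decisive point.
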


\begin{proof}
    For the proof we refer to \cite{AK19}.
\end{proof}

\begin{remark}
    For fixed $\delta >0$, it can be shown that  $\lim_{T \to \infty} \alpha(T) = 1$. Thus RHC is asymptotically optimal. Moreover, for fixed $T\geq T^*$ we obtain  that $\alpha \to -\infty$ as $\delta \to 0$. That is,  for arbitrarily small sampling times  $\delta$,  the suboptimality and asymptotic stability of RHC are not guaranteed.
\end{remark}

\section{Model-Order Reduction (MOR)}
\label{Section:4}

In this section, following a brief introduction to MOR, we introduce the POD-based RHC algorithm. MOR is a technique that addresses the challenge of simplifying mathematical models without substantially compromising their accuracy. The primary objective of MOR is to achieve model simplification while preserving essential characteristics. In our specific case, the objective is to utilize the ROM for constrained dynamics, coupled with RHC, to identify a sequence of optimal controllers that stabilize the reduced dynamic. Since the ROM captures the core features of the full dynamical system, the resultant control also functions as an approximation for RHC (the sequence of optimal controllers for the full-order model system) to stabilize the entire system. The main advantage of MOR lies in its ability to significantly reduce the computational resources and complexity involved in simulating and controlling large-scale systems. A satisfactory ROM should exhibit the following characteristics:
\begin{itemize}
    \item[(i)] The approximation error should be small.
    \item[(ii)] The ROM should be computationally efficient.
    \item[(iii)] The procedure to get the ROM should be automatic and also computationally efficient.
\end{itemize}

For our example, we introduce two low-dimensional subspaces $V^\ell_y \subset V$ and $ V^\ell_p \subset V$ for the state equation \eqref{e17} and the adjoint equation \eqref{e52}, respectively. Then for given $\bu\in\U_T^{\bar t_0}$ the reduced-order state $y^\ell=y^\ell_\bu\in W(\bar t_0,\bar t_0+T)$ satisfies for almost all $t\in(\bar t_0, \bar t_0+T)$
\begin{align}
    \label{ROMstate}
    \left\{
    \begin{aligned}
        {\langle\dot y^\ell(t),\psi\rangle}_{V',V}+\nu\,{(\nabla y^\ell(t),\nabla \psi)}_H+{(a(t)y^\ell(t),\psi)}_H-{(b(t)y^\ell(t),\nabla \psi)}_H&=\sum_{i=1}^Nu_i(t){(\mathbf{1}_{R_i},\psi)}_H,\\
        y^\ell(\bar{t}_0)&=\mathcal P^\ell\bar{y}_0\quad\text{in } H
    \end{aligned}
    \right.
\end{align} 
for all $\psi \in V^\ell_y$, where $\mathcal P^\ell:H \to V^\ell_y$ is a linear and bounded (projection) operator. Further, for the corresponding reduced-order dual variable $p^\ell=p^\ell_\bu\in W(\bar t_0,\bar t_0+T)$ it holds for almost all $t \in (\bar t_0,\bar t_0+T)$ and all $\psi\in V^\ell_p$ that
\begin{align*}
    \left\{
    \begin{aligned}
        -{\langle\dot p^\ell(t),\psi\rangle}_{V',V}+\nu\,{(\nabla p^\ell(t),\nabla\psi)}_H+{(a(t)\psi,p^\ell(t))}_H-{(b(t)\varphi,\nabla p^\ell(t))}_H&={(y^\ell(t),\varphi)}_V,\\
        p^\ell(\bar{t}_0+T)&= 0 \quad \text{in } H,
    \end{aligned}
    \right.
\end{align*}
where  $y^\ell$ is the solution of \eqref{ROMstate}.

To obtain a computationally ROM, we introduce a finite element (FE) discretization of the state equation \eqref{e17} and its corresponding adjoint equation \eqref{e52}.
Providing further details, let $\varphi_1, \ldots, \varphi_m \in V$ represent the FE ansatz functions. The FE subspace, denoted as $V^\mathsf{fe}$, is defined by $V^\mathsf{fe} = \spn \{ {\varphi_1, \ldots, \varphi_m}\} \subset V$. For $(t, x) \in (0, \infty) \times \Omega$, we approximate the state as
\begin{equation*}
    y(t,x)\approx y^\mathsf{fe}(t,x)=\sum_{i=1}^m y_i^\mathsf{fe}(t)\varphi_i(x)\quad\text{for }  (t,x)\in (0,\infty)\times \Omega,
\end{equation*}
with initial value
\begin{equation*}
    y_0(x)\approx \sum_{i=1}^m y_{0i}^\mathsf{fe}\varphi_i(x)\quad\text{for }  x\in \Omega.
\end{equation*}
We also introduce the $(m\times m)$-matrices
\begin{align*}
    M_{ij}&={(\varphi_j,\varphi_i)}_H, &S_{ij}&={(\varphi_j,\varphi_i)}_V &&\text{for } 1\leq i,j \leq m,\\
    C(t)_{ij}&={(\varphi_j,a(t)\varphi_i)}_H,&D(t)_{ij}&={(\varphi_j,\nabla\cdot(b(t)\varphi_i))}_H &&\text{for } 1\leq i,j \leq m, \\
    A(t)&= \nu S - C(t)-D(t)
\end{align*}
and the $(m \times N)$-FE-control matrix
\begin{align*}
    B_j= \left[{(\mathbf{1}_{R_1},\varphi_j)}_H, \dots, 
{(\mathbf{1}_{R_N},\varphi_j)}_H\right] \quad \text{for } 1\leq j \leq m.
\end{align*}

For the sake of convenience, we will proceed in the remaining part of the section by defining
\begin{align*}
    \by(t)=\big[y_1^\mathsf{fe}(t),\ldots,y_m^\mathsf{fe}(t)\big]^\top\in\mathbb R^m\quad\text{and}\quad\by_0= \big[y_{0,1}^\mathsf{fe},\ldots,y_{0,m}^\mathsf{fe}\big]^\top\in\mathbb R^m.    
\end{align*}
Now, problem \eqref{optT} is approximated by the following FE optimization problem
\begin{equation}
    \label{eq:opt_discrete}
    \tag{$\mathbf{OP}^{\mathsf{fe}}_T(\bar{t}_0,\bar{\by}_0)$}
    \begin{aligned}
        &\min J_T^\mathsf{fe}(\bu;\bar{t}_0,\bar{\by}_0)=\frac{1}{2}\int_{\bar{t}_0}^{\bar{t}_0+T}\by(t)^\top S\by(t)+\beta\,{|\bu(t)|}^2_1\,\mathrm dt\\
	    &\hspace{0.5mm}\text{s.t. }\bu(t) \in \mathbb R^N \text{ and }\by(t )\in\mathscr Y^\mathsf{fe}\text{ solves the initial value problem}\\
	    &\hspace{12mm}M\dot\by(t)=A(t)\by(t)+B\bu(t)\text{ for }t\in(\bar{t}_0,\bar{t}_0+T),\quad\by(0)=\bar{\by}_0
    \end{aligned}
\end{equation}
with $\mathscr Y^\mathsf{fe}=H^1((\bar{t}_0,\bar{t}_0+T);\mathbb R^m)$. 
Later on, we will use the notation
\begin{equation}
    \label{e25a}
    \begin{aligned}
        \dot\by(t)=\bm f_y(t,\by(t),\bu(t))\text{ for } t\in(\bar t_0,\bar t_0+T),\quad\by(\bar t_0)=\bar{\by}_0,
    \end{aligned}
\end{equation}
where the state dynamic $\bm f_y$ is given by
\begin{align*}
   \bm f_y(t,\by(t),\bu(t))\colonequals M^{-1}(A(t)\by(t)+B\bu(t)).
\end{align*}
The discrete first-order sufficient optimality condition reads as follows:
\begin{equation*}
    0=\nabla J_T^\mathsf{fe}(\bu,\bar{t}_0,\bar{\by}_0)= \beta \bu(t) - B^\top \bp(t) \quad \text{for a.e. }  t\in (\bar{t}_0,\bar{t}_0+T),
    \end{equation*}
where $ \bp(t)\colonequals[p_1^\mathsf{fe}(t),\ldots,p_m^\mathsf{fe}(t)]^\top\in\mathbb R^m$ is the solution of the discrete adjoint equation 
\begin{equation}
    \label{eAdj}
    \begin{aligned}
        -\dot\bp(t)=\bm f_p(t,\bp(t),\by(t))\text{ for } t\in(\bar t_0,\bar t_0+T),\quad\bp(\bar{t}_0+T)=0,
    \end{aligned}
\end{equation}
and the adjoint dynamic $\bm f_p$ is given by
\begin{align*}
   \bm f_p(t,\bp(t),\by(t))\colonequals M^{-1}\big(A^\top(t)\by(t)-S\by(t)\big).
\end{align*}
To initialize the MOR, the objective is to automatically find reduced dynamics for the state and adjoint equations. In Section~\ref{sec:POD}, we discuss an efficient method for computing a reduced system that significantly reduces the computational complexity while preserving the essential characteristics of the system.

\subsection{The POD method}
\label{sec:POD}

The (discrete) POD method is based on constructing a low-dimensional subspace that can resemble the information carried out by a given set of vectors $\{\bz_j\}_{j=1}^n\subset\mathbb R^m$ (the so-called \emph{snapshots}); cf., e.g., \cite{KV01} and \cite[Section~2.1]{GV17}. Let
\begin{align*}
	\mathscr V=\mathrm{span}\,\big\{\bz_1,\ldots,\bz_n\big\}\subset\mathbb R^m
\end{align*}
be the space spanned by the snapshots with dimension $d_\mathscr V=\dim \mathscr V\le \min\{n,m\}$. To avoid trivial cases we assume $d_\mathscr V\ge0$. For $\ell\le d_\mathscr V$ the POD method generates pairwise orthonormal functions $\{\psi_i\}_{i=1}^\ell$ such that all snapshots can be represented with sufficient accuracy by a linear combination of the $\psi_i$'s. This is done by a minimization of the mean square error between the snapshots and their corresponding $\ell$-th partial Fourier sum:
\begin{equation}
    \label{PODBasisProb}
    \left\{
    \begin{aligned}
    &\min\sum_{j=1}^n\alpha_j \Big| \bz_j-\sum_{i=1}^\ell{(\bz_j,\psi_i)}_W\,\psi_i\Big|_W^2\\
    &\hspace{0.5mm}\text{s.t. } \{\psi_i\}_{i=1}^\ell\subset \mathbb R^m\text{ and }{(\psi_i,\psi_j)}_W=\delta_{ij},~1 \le i,j \le \ell,
    \end{aligned}
    \right.
\end{equation}
where the $\alpha_j$'s are positive weighting parameters for $j=1,\ldots,n$ and for $z,v\in \mathbb R^m$ we have $(z,v)_W\colonequals z^\top Wv$ and  $|z|^2_W\colonequals z^\top Wz$. The symbol $\delta_{ij}$ denotes the Kronecker symbol satisfying $\delta_{ii}=1$ and $\delta_{ij}=0$ for $i\neq j$.

In our application, we have $\bz_j\approx \by(t_j)$ or $\bz_j\approx \bp(t_j)$ for a time grid $\tau_j=(j-1)\Delta\tau$, $j=1,\ldots,n$, with $\Delta\tau=\nicefrac{T}{(n-1)}$, where $\by$ and $\bp$ solve the state equation \eqref{e25a} and the adjoint equation \eqref{eAdj}, respectively. Moreover, we have $W=M$ or $W=S$, and the weights $\alpha_j$ are chosen to resemble a trapezoidal rule for the temporal integration.\hfill\\
An optimal solution to \eqref{PODBasisProb} is denoted as a \emph{POD basis of rank $\ell$}. It can be proven that such a solution is characterized by the eigenvalue problem
\begin{align}
\label{POD_EVP}
\mathcal R\psi_i =\lambda_i\psi_i\quad\text{for }1\le i\le\ell,
\end{align}
where $\lambda_1\geq \ldots \geq \lambda_\ell \geq \ldots \geq \lambda_{n_\mathscr V} > 0$ denote the eigenvalues of the linear, compact, nonnegative and self-adjoint (with respect to the $W$ inner product) operator
\[
\mathcal R:\mathbb R^m \to \mathbb R^m, \quad \mathcal R\psi=\sum_{j=1}^n\alpha_j\,{(\psi,\bz_j)}_W\,\bz_j\quad\text{for }\psi\in \mathbb R^m;
\]
cf., e.g.,  \cite[Lemma~2.2]{GV17}. We refer to \cite[Remark~2.11]{GV17} for an explicit form of the operator $\mathcal R$. Recall that for a solution $\{\psi_i\}_{i=1}^\ell$ to \eqref{POD_EVP} the following approximation error formula holds true:
\[
\sum_{j=1}^n\alpha_j\Big|\bz_j-\sum_{i=1}^\ell{(\bz_j,\psi_i)}_W\,\psi_i\Big|_W^2=\sum_{i=\ell+1}^{d_\mathscr V}\lambda_i;
\]
cf. \cite[Theorem~2.7]{GV17}. We define the POD matrix $\Psi\colonequals [\psi_1|\ldots|\psi_\ell]\in\mathbb R^{m\times\ell}$ and derive a POD-based ROM by Galerkin projection. In particular, if we denote the basis matrices arising from the state snapshots $\by_j$ by $\Psi_y$, the ROM for the state equation can be expressed as 
\begin{align*}
    \dot\by^{\ell_y}(t)=\bm f^\ell_y(t,\by^\ell(t),\bu(t))\colonequals\Psi_y^\top \bm f_y(t,\Psi_y \by^{\ell_y}(t),\bu(t)).
\end{align*}
Here, we consider the approximation $\by(t) \approx\Psi_y \by^{\ell_y}(t)$. Similarly, employing the basis matrix $\Psi_p$ derived from adjoint snapshots $\bp_j$, the ROM for the adjoint equation is obtained through the equation
\begin{align*}
    -\dot\bp^{\ell_p}(t)=\bm f^\ell_p(t,\by^\ell(t),\bu(t))\colonequals\Psi_p^\top \bm f_p(t,\Psi_p \bp^{\ell_p}(t),\Psi_y\by^{\ell_y}(t)).
\end{align*}
For more details we refer the reader to \cite{GV17,HV08,KV01}, for instance.

\subsection{POD-based receding horizon algorithm} 

Now we can explain how the MOR technique based on POD as in \ref{sec:POD} is combined with the receding horizon control Algorithm~\ref{RHA} to stabilize \eqref{e1} around the zero. For a given sampling time $\delta>0$ and a chosen prediction $T>\delta$,  the POD-based RHC approach proceeds through the following steps:
\begin{itemize}
    \item[Step 1.] Compute the optimal control on the first interval $(0,T)$ with FMO discretization based on FE, i.e., \eqref{eq:opt_discrete} with $(\bar{t_0},\bar{\by}_0)\colonequals(0,\by_0)$. Then store all required FE snapshots for the state and adjoint equations and also set the RHC on $(0,\delta)$ equal to this computed optimal control.      
    \item[Step 2.] Generate separate POD bases both for state and for the adjoint equations from the stored snapshots.
    \item [Step 3.] Compute RHC on the interval $(\delta,\infty)$ or, more precisely, on all the subsequent time interval $(k\delta,(k+1)\delta)$ with $k\geq 1$ as follows:
    \begin{itemize}
        \item[(i)] Given an initial pair $(\bar t_0,\bar\by_0)$ with $\bar{t}_0\geq \delta $, approximate the optimal solution to \eqref{eq:opt_discrete}  using the POD-based MOR obtained by the snapshots given in Step 2, to obtain an approximate control for the (unknown) optimal control on the interval $(\bar{t}_0,\bar{t}_0+\delta)$. Due to the low-dimensional nature of the problem, this process is expected to be highly expedient.
        \item[(ii)] Utilize the computed optimal control to steer the state within the high-dimensional FE model on the interval $(\bar{t}_0,\bar{t}_0+\delta)$ to compute the initial vector for the next interval with initial time $(\bar{t_0}+\delta)$. Repeat the process starting from (i).
    \end{itemize} 
\end{itemize}

The entire approach outlined above is outlined in Algorithm~\ref{RRHA_disc}, with the first two steps further summarized in Algorithm~\ref{RRHA_POD}.
\begin{algorithm}[htbp]
\caption{(Computing a POD basis for RHC($\delta,T$))}\label{RRHA_POD}
\begin{algorithmic}[1]
\REQUIRE{The prediction-train horizon $T^\mathsf{train}\ge\delta$, initial state $y_0$, basis tolerance $\mathsf{tol}$, POD weighting matrices $W \in \mathbb R^{m\times m}$ and $D=\text{diag}(\alpha_1,...,\alpha_m) \in \mathbb R^{m\times m}$};\\
\hspace{-6mm}\textbf{Output:} State basis $\Psi_y$, adjoint basis $\Psi_p$, and RHC for the first interval $(0,\delta)$; 
\STATE Find the solution $(\by^*_{T^\mathsf{train}}(\cdot\,;0,\by_0)),\bu^*_{T^\mathsf{train}}(\cdot\,;0,\by_0))$
over the time horizon~$(0,T^\mathsf{train})$ by solving the  open-loop problem
\begin{equation}
    \begin{aligned}
        &\min J^\mathsf{fe}_{T^\mathsf{train}}(\bu;0,\by_0)=\frac{1}{2}\int^{T^\mathsf{train}}_0{|\by(t)|}^2_S+\beta\,{|\bu(t)|}^2_{1}\,\mathrm dt\\
        &\hspace{0.5mm}\text{s.t. }\bu\in L^2(0,T^\mathsf{train};\mathbb{R}^N),\quad\dot\by(t)=\bm f_y(t,\by(t),\bu(t))\text{ for } t\in(0,T^\mathsf{train}),\quad\by(0)=\by_0
    \end{aligned}
\end{equation}
and store all (time discrete) snapshots for state and adjoint equations required in the optimization:
\begin{align}
    Y = [\by^{(1)},...,\by^{(iter)}]\quad \text{and} \quad  P = [\bp^{(1)},...,\bp^{(iter)}].
\end{align}
$\textbf{y}^{(i)}$ is the state solve with corresponding adjoint state $\textbf{p}^{(i)}$ to compute the gradient in the optimization algorithm;
\STATE Compute $\hat Y = W^{1/2}YD^{1/2}$ and $\hat P = W^{1/2}PD^{1/2}$;
\STATE Compute the truncated SVD $\hat Y\overset{tSVD}{=}\Psi_{y} \Sigma_{y} V_{y}^\top$ with truncation value 
\begin{align*}
    \ell_y = \underset{i=1,...,m}{\text{argmax }} \sigma_i \quad \text{s.t.}\quad \sigma_i\leq \mathsf{tol},
\end{align*}
where $\sigma_i$ denote singular values of $\hat Y$, i.e. $\Sigma_{y} = \text{diag}(\sigma_i : i=1,...,m)$;
\STATE Compute the truncated SVD $\hat P\overset{tSVD}{=}\Psi_{p} \Sigma_{p}V_{p}^\top$ with truncation value 
\begin{align*}
    \ell_p = \underset{i=1,...,m}{\text{argmax }} \sigma_i \quad \text{s.t.}\quad \sigma_i\leq \mathsf{tol},
\end{align*}
where $\sigma_i$ denote singular values of $\hat P$, i.e. $\Sigma_{p} = \text{diag}(\sigma_i : i=1,...,m)$;
\end{algorithmic}
\end{algorithm}
Notice that for the snapshot matrices it holds $Y,P\in \mathbb R^{m \times (iter \cdot (1+\nicefrac{T_\infty}{\Delta t}))}$ and for the resulting reduced bases we have $\Psi_y\in \mathbb R^{m \times \ell_y}$ and  $\Psi_p\in \mathbb R^{m \times \ell_p}$ with $\ell_y,\ell_p \ll m$.

\begin{algorithm}[htbp]
\caption{(Reduced RHC($\delta,T$) based on POD)}\label{RRHA_disc}
\begin{algorithmic}[1]
\REQUIRE{The sampling time $\delta$,  the prediction horizon $T\geq \delta$, and the initial state $y_0$;}
\ENSURE{The stability of RHC~$\urh$;}
\STATE Compute POD bases using Algorithm~\ref{RRHA_POD} for $T^{train}\colonequals T$ and get $\Psi_y$, $\Psi_p$, and the pair $(\by_T^*(\cdot\,;0, y_0))$, $\bu^*_T(\cdot\,;0, y_0))$;
\STATE For all $\tau \in [0,\delta)$,  set $\urhl(\tau)\colonequals\bu^*_T(\tau; 0, y_0)$ and $\yrhl(\tau)\colonequals\by^*_T(\tau; 0, y_0)$;
\STATE Set $(\bar{t}_0,\bar{\by}_0)\colonequals(\delta,  \yrhl(\delta))$;
\STATE Find the solution $\bu^*_T(\cdot\,;\bar{t}_0,\bar\by_0)$
over the time horizon $(\bar{t}_0,\bar t_0+T)$ by solving the open-loop problem
\begin{align*}
    \begin{aligned}
        &\min J^\ell_T(\bu;\bar t_0,\bar\by_0)=\frac{1}{2}\int^{\bar{t}_0+T}_{\bar t_0}|\Psi_y\by^\ell(t)|^2_{S}+\beta\,|\bu(t)|^2_{1}\,\mathrm dt\\
        &\text{ s.t. }\bu\in L^2(\bar t_0,\bar t_0+T;\mathbb{R}^N)\text{ and } \begin{cases}
            \dot{\by}^\ell(t)=\bm f_y^\ell(t,\by(t),\bu(t)) &\text{ in } (\bar{t}_0,\bar{t}_0+T),\\
            \by^\ell(\bar t_0)=\Psi_y^\top \bar{\by}_0;
        \end{cases}
    \end{aligned}
\end{align*}
\STATE Compute the optimal state $\by^*_T(\cdot\,;\bar t_0, \bar\by_0)$ over $[\bar{t}_0,\bar{t}_0+\delta)$ solving the FE state equation
\begin{align*}
    \dot{\by}(t)= \bm f_y(t,\by(t),\bu^*_T(t;\bar t_0,\bar\by_0))\text{ in } (\bar t_0,\bar t_0+\delta),\quad\by(\bar t_0)=\bar\by_0;
\end{align*}

\STATE For all $\tau \in [\bar{t}_0,\bar t_0+\delta)$ set $\urh(\tau)=\bu^*_T(\tau; \bar t_0,\bar\by_0)$ and $\byrh(\tau)=\by^*_T(\tau;\bar t_0,\bar\by_0)$;
\STATE Update: $(\bar t_0,\bar\by_0)  \leftarrow (\bar t_0+\delta,\byrh(\bar t_0 +\delta))$;
\STATE Go to Step  4;
\end{algorithmic}
\end{algorithm}

\section{Numerical experiments}
\label{Sec:NE}

In this section, we report on numerical experiments that illustrate the performance of Algorithm \ref{RRHA_disc} in comparison with Algorithm \ref{RHA} for the (high-dimensional) FE model. We have employed both algorithms for the stabilization of an exponentially unstable parabolic equation.

We consider various values of the prediction horizon $T$ while maintaining a constant sampling time $\delta=0.25$. Throughout the experiments, we fix $T_{\infty}=10$ as the final computation time. For solving the finite horizon optimal control problems with the $\ell_1$-norm, we apply the forward-backward splitting algorithm since the associated proximal operator can be evaluated efficiently. We applied a proximal gradient method as those  investigated in\cite{azmi2023nonmonotone,MR2792408,MR2678081,MR2650165} on the convex composite problem \eqref{e94}. More precisely, we followed the iteration rule
\begin{equation*}
    \bu^{j+1}=\prox_{\alpha_j\mathcal G}\big(\bu^j-\alpha_j D\mathcal F_T^{\bar t_0,\bar y_0}(\bu^j)\big)=\prox_{\alpha_j\mathcal G}(\bu^j-\alpha_j\mathcal B^\star p^{j}),
\end{equation*}
where $p^{j}\colonequals p(y^{j})$ is the solution of \eqref{e52} for the forcing function $\Delta y^j$ instead of $\Delta y^*$, and  $y^{j}=y(\bu^{j})$ is defined as the solution of \eqref{e41} for the control $\mathbf{u}^{j}$ instead of $\bu$. Moreover, the stepsize $\alpha_j$ is computed by a non-monotone linesearch algorithm which uses the Barzilai-Borwein stepsizes \cite{AzmiKunisch3,AK22a,BB88}  corresponding to the smooth part $ \mathcal F_T^{\bar t_0,\bar y_0}$  as the initial trial stepsize, see \cite{azmi2023nonmonotone,MR2792408,MR2678081,MR2650165} for more details. In this case the optimization algorithm was terminated as the following condition held
\begin{equation*}
    \frac{\|\bu^{j+1}-\bu^{j}\|_{\U_T(t_k)}}{\|\bu^{j+1}\|_{\U_T(t_k)}}\leq 10^{-4}.
\end{equation*}
The evaluation of the proximal operator $\prox_{\bar{\alpha}\mathcal{G}}$ was carried out by pointwise evaluation of \eqref{e87} at time grid points. Further, at every time grid point,  $\prox_{\bar{\alpha} g }$ was computed by \eqref{e85}, where the zero $\mu^*$ of the function $\psi(\mu)$ defined in \eqref{e95} was computed by the bisection method with the tolerance $10^{-10}$.
In our numerical tests, the spatial domain is defined as $\Omega\colonequals(0,1)^2 \subset \mathbb{R}^2$ and Figure~\ref{Fig:1} depicts the control domain $\omega$ as the union of 13 open rectangles. The control domain consists of thirteen percent of the domain.
\begin{figure}[htb!]
    \centering
    \includegraphics[height=4cm,width=4cm]{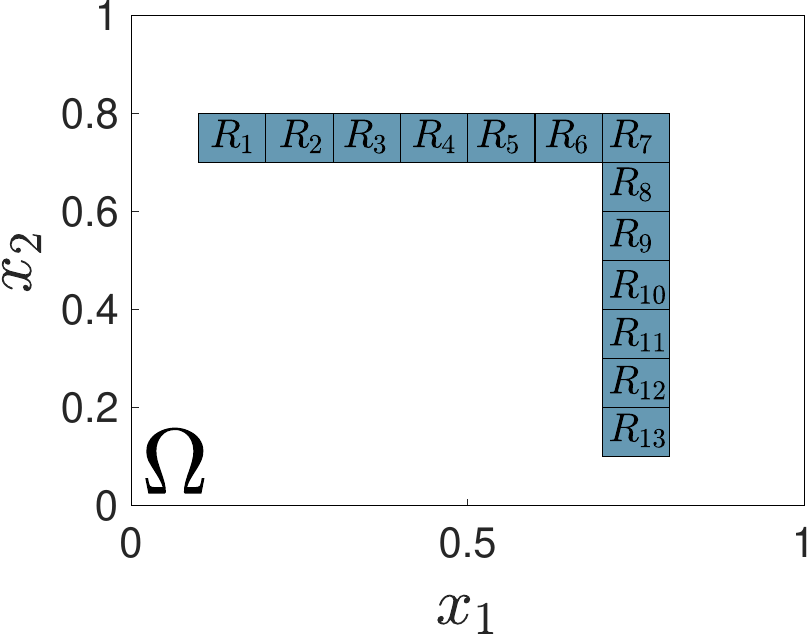}
    \caption{Spatial domain $\Omega$ and sub-rectangles $R_1,\ldots,R_{13}$ for the control actuators.\label{Fig:1}}
\end{figure}
The spatial discretization was done using a conforming linear FE scheme, employing continuous piecewise linear basis functions over a uniform triangulation with a diameter $h = 0.0442$ (resulting in $m=961$ interior nodes). Subsequently, the semi-discrete system of ordinary differential equations resulting from spatial discretization was numerically solved using the Crank-Nicolson time-stepping method with a step-size $\Delta t=\nicefrac{1}{80}$. In this context, the resulting discretized system achieves second-order accuracy in time and can equivalently be interpreted as a system discretized in time through a Petrov-Galerkin scheme. This scheme is based on continuous piecewise linear basis functions for the trial space and piecewise constant test functions.
Throughout our numerical simulation, we set $\nu=0.1$ and choose for $x=(x_1,x_2)\in \mathbb R^2$ that
\begin{align*}
    a(t,x)\colonequals -2-0.8\,|\sin(t+x_1)|, \quad b(t,x)\colonequals\left(
    \begin{array}{c}
          0.1\cos(t)-0.01(x_1+x_2)\\
          0.2x_1x_2\cos(t)
     \end{array}
     \right),
\end{align*}
and $y_0(x)\colonequals3\sin(\pi x_1) \sin(\pi x_2)$. For this choice, the uncontrolled state $\yun$ is exponentially unstable. Moreover, we have $\|\bm y_\mathsf{un}\|_{L^2(0,T_\infty;\mathbb R^m)}=3.99 \times 10^2$, and $|\bm y_\mathsf{un}(T_\infty)|_M=4.00 \cdot 10^2$.  The regularization parameter, denoted by $\beta$, is assigned a value of $5$.

Figure~\ref{Fig:2} illustrates the evolution of $\log(|\bm y_\mathsf{rh}(t)|_M)$ (solid line) alongside the corresponding reduced-order model $\log(|\bm y^\ell_\mathsf{rh}(t)|_M)$.
\begin{figure}[htb!]
    \centering
    \includegraphics[trim = 0mm 0mm 0mm 0mm, width=0.6\textwidth]{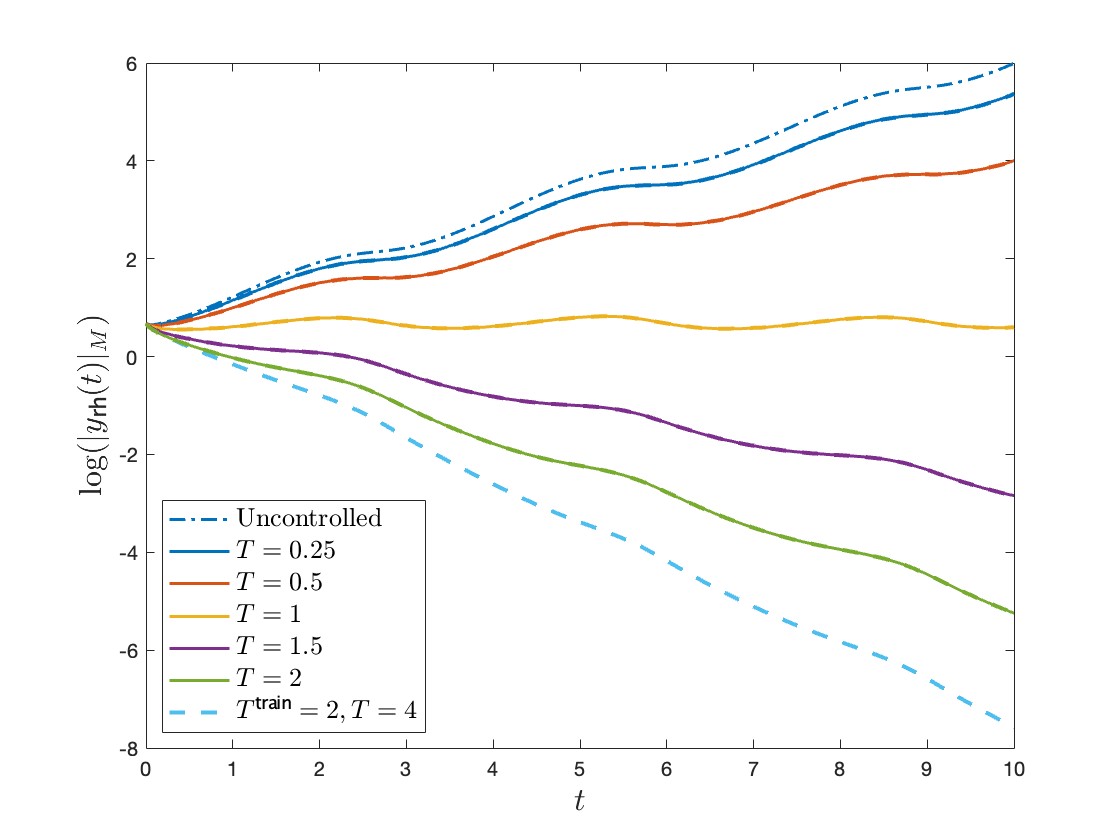}
    \caption{Evolution of $\log(|\bm y_\mathsf{rh}(t)|_M)$ (solid line) along with the corresponding reduced-order model $\log(|\yrhl(t)|_M)$ (dashed line) for various choices of $T$. For the reduced models, $T^\mathsf{train}$ is equal to $T$ in the first five cases. The finite element model for $T=4$ is numerically extremely expensive, and for this reason, it is not computed in this article.}
    \label{Fig:2}
\end{figure}
The corresponding results are gathered in Table~\ref{table1}. For the reduced models, $T^\mathsf{train}$ aligns with $T$ in the initial five cases, i.e., $T \in \{0.25,0.5,1,1.5,2\}$. Notably, in this plot, the reduced model closely aligns with the full model, demonstrating their behavior.
\begin{table}[htbp]
    \begin{center}
        \scalebox{0.9}{
        \begin{tabular}{ | c | c | c | c | c | c | c |}\hline
            Model &$T^\mathsf{train}$& $T$  & $\|\bm y_\mathsf{rh}\|_{L^2(0,T_{\infty};\mathbb R^m)}$  & $J_{T_\infty}^\mathsf{fe}$ &$|\bm y_\mathsf{rh}(T_{\infty})|_M$ & CPU-time\\\hline
            FE & - &$ 0.25$ &$ 2.30 \times 10^{2}$ & $ 5.31 \times 10^{5}$& $ 2.15 \times 10^{2}$& $3.16 \times 10^{2}$s \\ \hline
            FE & - &$ 0.5$ &$ 7.04 \times 10^{1}$ & $ 5.66 \times 10^{4}$& $ 5.47 \times 10^{1}$& $7.50 \times 10^{2}$s\\ \hline
            FE & - &$ 1$ &$ 6.27 \times 10^{0}$ & $ 2.60 \times 10^{3}$& $ 1.82 \times 10^{0}$& $1.73 \times 10^{3}$s \\ \hline
            FE & - &$ 1.5$ &$ 2.24 \times 10^{0}$ & $ 1.03 \times 10^{3}$& $ 5.83 \times 10^{-2}$& $3.12 \times 10^{3}$s \\ \hline
            FE & - &$ 2$ &$ 1.68 \times 10^{0}$ & $ 7.77 \times 10^{2}$& $ 5.30 \times 10^{-3}$& $5.97 \times 10^{3}$s \\ \hline\hline
            POD & $0.25$ &$ 0.25$ &$ 2.29 \times 10^{2}$ & $ 5.26 \times 10^{5}$& $ 2.14 \times 10^{2}$& $2.08 \times 10^{1}$s \\ \hline
            POD & $0.5$ &$ 0.5$ &$ 7.03 \times 10^{1}$s & $ 5.65 \times 10^{4}$& $ 5.46 \times 10^{1}$& $3.35 \times 10^{1}s$ \\ \hline
            POD & $1$ &$ 1$ &$ 6.26 \times 10^{0}$ & $ 2.60 \times 10^{3}$& $ 1.82 \times 10^{0}$& $6.63 \times 10^{1}$s \\ \hline
            POD & $1.5$ &$ 1.5$ &$ 2.24 \times 10^{0}$ & $ 1.03 \times 10^{3}$& $ 5.83 \times 10^{-2}$& $1.26 \times 10^{2}$s \\ \hline
            POD & $2$ &$ 2$ &$ 1.68 \times 10^{0}$ & $ 7.77 \times 10^{2}$& $ 5.30 \times 10^{-3}$& $1.79 \times 10^{2}$s \\ \hline
            POD & $2$ &$ 4$ &$ 1.72 \times 10^{0}$ & $ 6.81 \times 10^{2}$& $ 5.14 \times 10^{-4}$& $1.85 \times 10^{2}$s \\ \hline
        \end{tabular}}
    \end{center}
    \caption{Numerical results for the FE and reduced POD RHC framework.}
    \label{table1}
\end{table}
Further insights into the absolute $L^2$-error, depicted by $t \mapsto |\bm y_\mathsf{rh}(t)-\bm y_\mathsf{rh}^\ell(t)|_M$, are provided in Figure \ref{Fig:3}. Notably, as the training duration increases, the results improve, showcasing a trend toward capturing the entire time-varying periodic dynamics for the ROM. The longer the training phase, the more information can be reflected in the bases. However, this is also a trade-off because as the training period increases (first RHC iteration,i.e. $T^\mathsf{train}$ increases), the numerical costs also rise.

We have also observed once a sufficiently good basis has been computed (for example, for $T^\mathsf{train}=2$), one can extend the prediction horizon $T$ for the reduced model. Consequently, in a much shorter computation time, superior results can be achieved compared to the full model. This suggests that it is more beneficial to look further into the future and obtain an approximate solution, rather than calculating the exact, more computationally expensive solution for a shorter prediction horizon.
  \begin{figure}[htb!]
    \centering
    \includegraphics[trim = 0mm 0mm 0mm 0mm, width=0.6\textwidth]{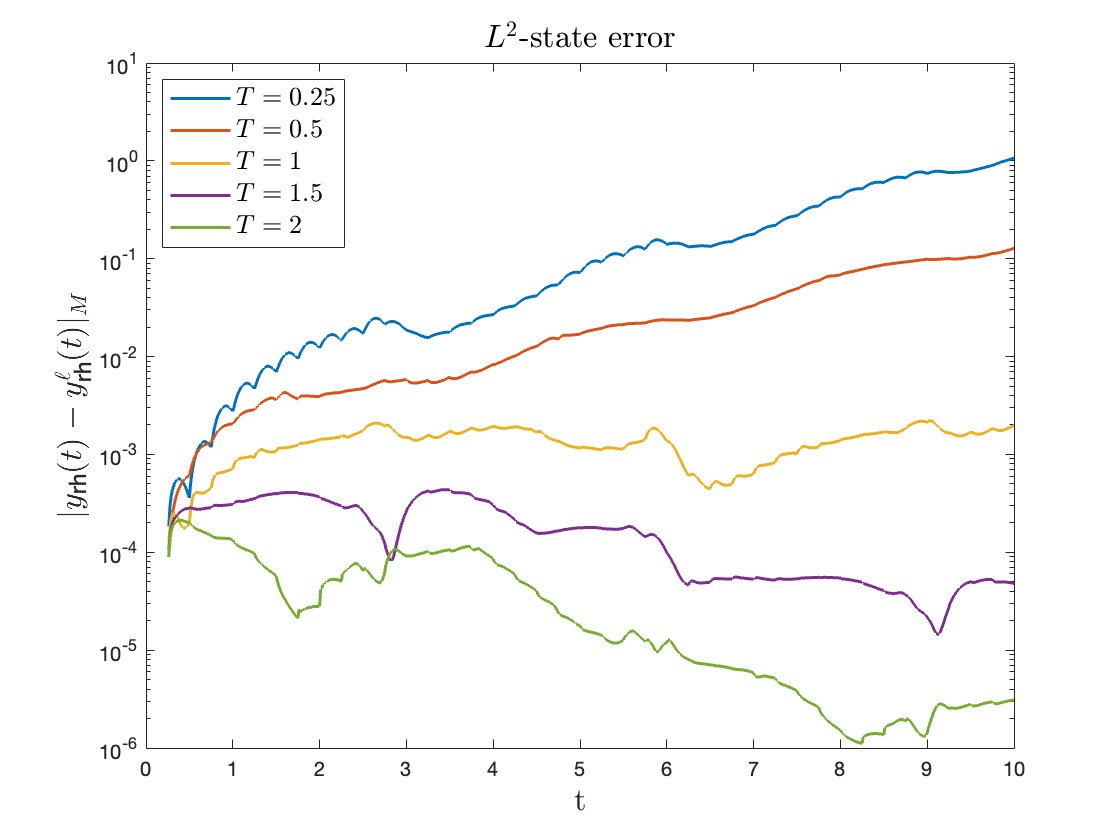}
    \caption{Evolution of the absolute $L^2$-state error $|(\bm y_\mathsf{rh})_i(t)-(\yrhl)_i(t)|_M$ for various choices of $T$.}
\label{Fig:3}
\end{figure}

\noindent
In Figure~\ref{Fig:4a} the mapping $t\mapsto|(\urh)_i(t)|$, $i=1,\ldots,13$, is plotted.
\begin{figure}[htb!]
    \centering
    \includegraphics[trim = 0mm 0mm 0mm 0mm, width=0.9\textwidth]{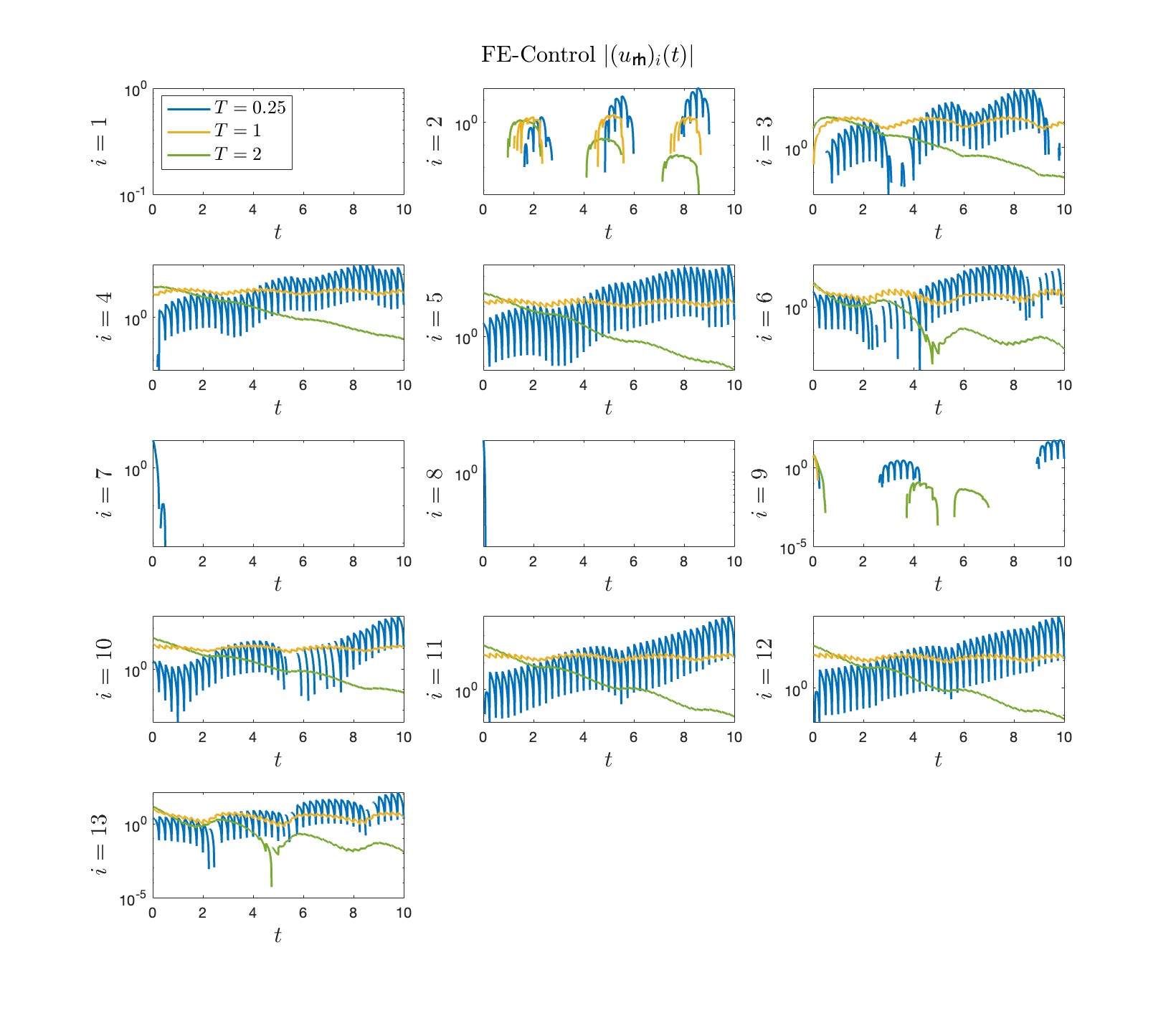}
    \caption{Evolution of the absolute FOM-based control error $t\mapsto|(\urh)_i(t)|$ for $i=1,...,13$.}
    \label{Fig:4a}
\end{figure}
The sparsity of the FOM-based control can be observe numerically: $|(\urh)_1|=0$ and $|(\urh)_1|\approx0$ for $i=7$ and $8$ in $[0,T]$. Furthermore, $|(\urh)_i|$, $i=2$, $3$ and $9$ is turned on and of in $[0,T]$

In Figure~\ref{Fig:4}, the error in the control is illustrated.
\begin{figure}[htb!]
    \centering
    \includegraphics[trim = 0mm 0mm 0mm 0mm, width=0.9\textwidth]{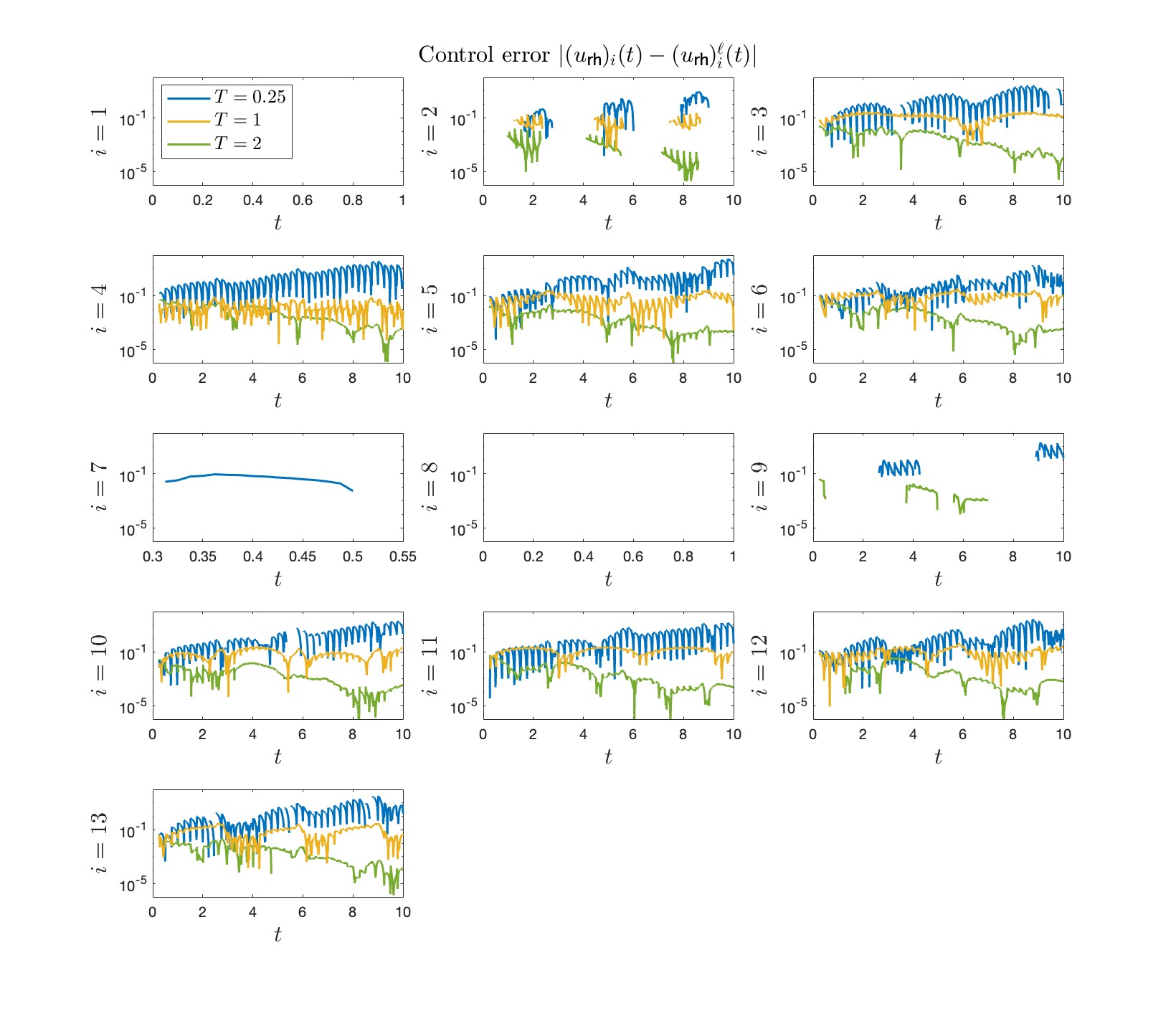}
    \caption{Evolution of the absolute control error $t\mapsto|(\urh)_i(t)-(\urhl)_i(t)|$ for $i=1,...,13$. It turns out that the sparsity of the FOM-based RHC control $\urh$ is very well aproximated by the POD-based RHC control $\urhl$.}
    \label{Fig:4}
\end{figure}
The utilization of the $\ell_1$-norm for the control vector contributes to increased sparsity in the active control compartments. This enhances stabilization with a small number of active controllers. It is noteworthy that the sparsity pattern aligns remarkably well between the full and reduced models, indicating that whenever a control is zero in the full model, the corresponding control in the reduced model is also zero. This consistency emphasizes the effectiveness of the reduced-order model in capturing the sparsity characteristics of the original full model.

Regarding the CPU time, the following observations can be made from Table \ref{table2}: It is noticeable that the first iteration takes approximately the same amount of time for both the reduced and the normal RHC algorithms. Once the basis is built, the reduced model becomes significantly faster. 
This efficiency gain is particularly advantageous in MPC scenarios, where problems with infinite time horizons are commonly considered, i.e. $T_\infty \to \infty$.

In each iteration, the reduced model consistently exhibits a substantial speedup compared to its full counterpart. This speedup becomes increasingly pronounced as the time horizon approaches infinity. The reduced model's ability to capture essential information from the training phase translates into improved computational efficiency, making it an attractive choice for real-time applications and scenarios where rapid decision-making is crucial.

\begin{table}[htbp]
\begin{center}
\scalebox{0.9}{
  \begin{tabular}{ | c | c | c | c | c  |}
    \hline
    Model &$T^\mathsf{train}$& $T$  &1st RHC iteration  & $i$-th RHC iteration \\
    \hline
    FE & - &$ 0.25$ &\phantom{11}7s & \phantom{11}7s \\ \hline
    FE & - &$ 0.5$ &\phantom{1}19s &\phantom{1}19s \\ \hline
    FE & - &$ 1$ &\phantom{1}51s &\phantom{1}42s \\ \hline
    FE & - &$ 1.5$ &110s & \phantom{1}80s \\ \hline
    FE & - &$ 2$ &165s & 130s \\ \hline \hline
     POD & $0.25$ &$ 0.25$ &\phantom{11}7s & \phantom{1}0.3s \\ \hline
    POD & $0.5$ &$ 0.5$ &\phantom{1}19s &\phantom{1}0.3s \\ \hline
    POD & $1$ &$ 1$ &\phantom{1}52s &\phantom{1}0.4s \\ \hline
    POD & $1.5$ &$ 1.5$ &111s & \phantom{1}0.4s \\ \hline
    POD & $2$ &$ 2$ &166s & \phantom{1}0.4s \\ \hline
    POD & $2$ &$ 4$ &167s & \phantom{1}0.5s \\ \hline 
 \end{tabular}}
 \end{center}
 \caption{Average CPU-time (in seconds) for the first RHC iteration and the $i$-th RHC iteration with $i=2,...,40$.}
 \label{table2}
\end{table}

  \begin{figure}[htb!]
    \centering
     \label{Fig:5}
        \includegraphics[trim = 0mm 0mm 0mm 0mm, width=0.6\textwidth]{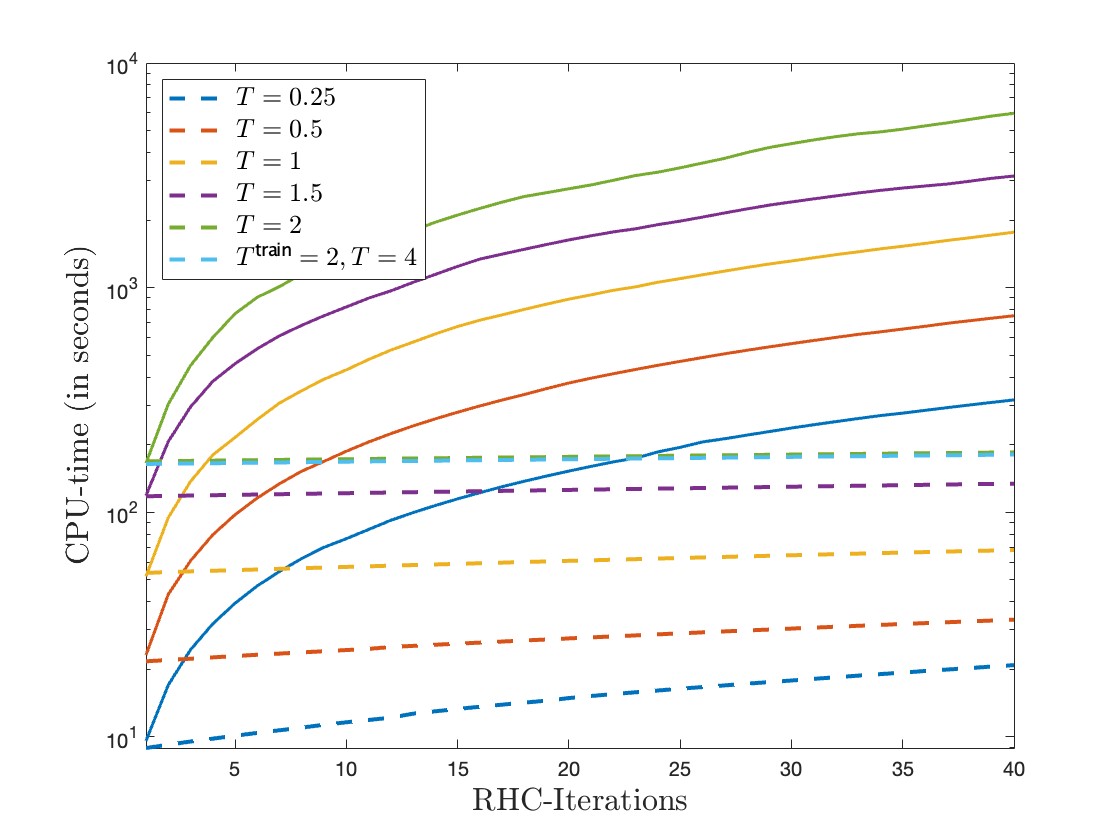}
     \caption{Comparison of CPU times between the FOM and ROM. The solid line \texttt{(-)} represents the CPU time of the FOM, and the dashed line \texttt{(--)} represents the CPU time for the ROM for various choices of $T$. For the reduced models, $T^\mathsf{train}$ is equal to $T$ in the first five cases.}
\end{figure}

\section{Discussion}

\textbf{Stabilization and Computational Efficiency with ROM RHC:}\\
The implementation of a reduced-order model within the receding horizon control framework demonstrates a dual advantage of stabilizing the system equation while significantly improving computational efficiency. The ability of ROMs to offer both stability and enhanced computational efficiency makes them a promising choice for real-time applications, where the speed of computation is of utmost importance. Importantly, this discussion opens up the possibility of extending the ROM framework to nonlinear dynamics, further broadening its potential applications. The ability of ROMs to handle non-linear systems would be a substantial advancement, providing a more versatile and comprehensive solution for control in complex dynamic environments.
\\
\\
\textbf{Sparsity Pattern Alignment:}\\
The alignment of sparsity patterns between the full system and the reduced model is a significant observation. The remarkable consistency in sparsity patterns indicates that the reduced model faithfully represents the system dynamics. This alignment is crucial in ensuring that the reduction process retains the essential features of the system, affirming the reliability of the ROM in capturing system behavior while maintaining computational efficiency.
\\
\\
\textbf{Impact of Training Duration on ROM Performance:}\\
A noteworthy observation is the correlation between the training duration and the performance of ROM in capturing time-varying periodic dynamics. As the training phase duration increases, there is a clear trend towards improved results, indicating a more comprehensive representation of system dynamics in the ROM bases. This emphasizes the importance of allowing sufficient time for the training phase to enable the ROM to capture a wide range of information.\\
An intriguing question arises concerning the adaptivity of the training phase to capture all important information efficiently. The suggestion of an adaptive training strategy is explored to align the training phase with the dynamic characteristics of the system. This adaptive approach not only ensures the inclusion of critical dynamics but also accelerates the initial RHC iteration. This innovation holds the potential for optimizing the trade-off between accuracy and computational efficiency.
\\
\\
In conclusion, the discussion highlights the promising aspects of ROM-based RHC, emphasizing the need for a balanced approach to training duration, and accuracy in capturing system dynamics. The observed alignment in sparsity patterns further reinforces the credibility of reduced models in practical applications. The ongoing exploration of adaptive training strategies holds the potential to further enhance the efficiency and effectiveness of ROM-based receding horizon control.
\bibliographystyle{plain}
\bibliography{References.bib}

\end{document}